\theoremstyle{plain}
\newtheorem{theoreme}{Theorem}[section]
\newtheorem{lemme}[theoreme]{Lemma}
\newtheorem{proposition}[theoreme]{Proposition}
\newtheorem{definition}[theoreme]{Definition}
\newtheorem{remarque}[theoreme]{Remark}
\newtheorem*{remarque*}{Remark}
\newtheorem{corollaire}[theoreme]{Corollary}
\author{}
 \def\dar[#1]{\ar@<2pt>[#1]\ar@<-2pt>[#1]}
  \def\commutatif{\ar@{}[rd]|{\circlearrowleft}}
\title{Existence of slices on a tame context \footnote{Submitted in CEJM} 
}
\author{Sophie Marques}
\date{\today}
\newcommand{\eq}[1][r]
   {\ar@<-3pt>@{-}[#1]
    \ar@<-1pt>@{}[#1]|<{}="gauche"
    \ar@<+0pt>@{}[#1]|-{}="milieu"
    \ar@<+1pt>@{}[#1]|>{}="droite"
  \ar@/^2pt/@{-}"gauche";"milieu"
   \ar@/_2pt/@{-}"milieu";"droite"
}
\address[MARQUES Sophie]{Visiting assistant professor at New York University, Courant institute of mathematical sciences, 251 Mercer St, New York, NY 10012, USA}
\email{marques@cims.nyu.edu}
\def\dar[#1]{\ar@<2pt>[#1]\ar@<-2pt>[#1]}
\def\commutatifs{\ar@{}[lu]|{\circlearrowleft}}
\begin{document}

\setcounter{page}{1} 
\maketitle
\selectlanguage{english}
\setcounter{tocdepth}{4}
\footnotesize
\tableofcontents
\bibliographystyle{alpha-fr}
\pagestyle{plain}
\begin{abstract}
We study the ramification theory for actions involving group schemes, focusing on the tame ramification. We consider the notion of tame quotient stack introduced in [AOV] and the one of tame action introduced in [CEPT]. We establish a local Slice theorem for unramified actions and after proving some interesting lifting properties for linearly reductive group schemes, we establish a Slice theorem for actions by commutative group schemes inducing tame quotient stacks. Roughly speaking, we show that these actions are induced from an action of an extension of the inertia group on a finitely presented flat neighborhood. We finally consider the notion of tame action and determine how this notion is related to the one of tame quotient stack previously considered.
\end{abstract}
\footnotesize{
\subsection*{Keywords} Group schemes, Ramification, Unramified, Freeness, Tameness, Slice theorem, Linearly reductive group schemes, Lifting, Actions, Quotient stack, Trivial cohomology.
\subsection*{Acknowledgment} I would like to thanks my supervisors Boas Erez and Marco Garuti but also Angelo Vistoli and Yuri Tschinkel for their help and advise.}
\large
\section*{Introduction} 
It is well-known that locally, for the étale topology, 
actions of constant group schemes (and slightly more generally etale group schemes) are induced from actions of their inertia groups: 
one can reconstruct the original action from the action of an inertia group at a point 
(Theorem \ref{ringslice}) (this is a direct extension of the classic result of decomposition of finite extensions of valuation field after moving to the completion \cite[II, \S 3, Théorème 1, Corollaire 4 and  Proposition 4]{Serre}). This statement is an instance of a {\em slice theorem} 
(see Theorem \ref{constantslice}). 

In this paper, we establish a more general slice theorem, under a tameness hypothesis.  
One motivation for studying slices in this generality is the theory of 
tame covers, in the sense of Grothendieck and Murre: these admit étales slices 
(see Abhyankar's lemma in \cite{GM}, \cite{CE}). 
Another motivation is the fundamental theorem of Luna, which states that 
actions of linearly reductive algebraic groups on an affine varieties are tame (see \cite{Luna}) and admit slices.

In the simple case when the action has trivial inertia groups our 
slice theorem is simply the statement that freeness is local (Proposition \ref{locfree}).
More general tame quotient stacks were introduced by Abramovich, Olsson and Vistoli in \cite[\S 3]{AOV} 
in their study of certain ramification issues arising in moduli theory. 
Here, we characterize tameness for quotient stacks for actions of 
finite commutative group schemes via the existence of finitely presented flat slices 
with linearly reductive slice groups (Theorem \ref{commslice}). 
Roughly speaking, we show that these actions are induced from an action of 
an extension of the inertia group on a finitely presented flat neighborhood. 
Furthermore, this lifting of the inertia group can be constructed also in the non-commutative case, 
as a subgroup of the initial group and as a flat and linearly reductive group scheme (Theorem \ref{extension}). 
Moreover, \cite[Theorem 3.2]{AOV} shows that tameness is characterized by the property that 
all inertia groups at topological points are linearly reductive  
(it suffices to require this only at the geometric points).
 
This is what one could have expected as a definition of tameness. 
In fact, let us consider a $\Gamma$-extension of Dedekind rings $B/B^\Gamma$, 
where $\Gamma$ is an abstract finite group and $B^\Gamma$ 
is the invariant ring for the action of $\Gamma$ on $B$. 
For any prime ideal $\mathfrak{p}$ of $B$ we write $k( \mathfrak{p})$; 
then, it is well known, that $B/B^\Gamma$ is tame if and only if for all prime $\mathfrak{p}$ of $B$, the inertia group $\Gamma_0 (\mathfrak{p})$ has order prime to the characteristic of $k ( \mathfrak{p})$. 
This last condition is equivalent to requiring that the group algebra 
$ k ( \mathfrak{p}) [\Gamma_0 ( \mathfrak{p})]$ be semisimple, i.e., 
that the constant group scheme $\underline{\Gamma_0 (\mathfrak{p})}$ attached to $\Gamma_0( \mathfrak{p})$ is linearly reductive. 
But as $k( \mathfrak{p})$-group, $\underline{\Gamma_0 (\mathfrak{p})}$ is exactly the inertia group of
 the action of the constant group scheme attached to 
$\Gamma$ on $X=Spec(B)$ (cf \cite[III, \S 2, n° 2, Example 2.4]{DemGab}).  \\  

Independently, in \cite[\S 2]{Boas}, Chinburg, Pappas, Erez and Taylor introduced the notion of {\em tame actions}. 
Thus it is natural to anticipate a relationship between these two notions of tameness. 
We prove that under quite general hypotheses, 
{\em tame actions} define {\em tame quotient stacks} (Theorem \ref{inertiefini}). 
In fact, under additional hypotheses such as finiteness, the two notions are equivalent (Theorem \ref{proppal}). 
Thus, the previous results apply to this notion of tameness and we can
answer more precisely \cite[\S 4, Question $2$ and $3$]{Boas}.\\

\section*{Hypotheses, basic concepts and notation}

We write fppf for faithfully flat and finitely presented. Throughout, we fix the following notation.

\

Let $R$ be a commutative, noetherian, unitary base ring:  all modules and algebras are over $R$ and 
all algebras are commutative. Let $S:=Spec(R)$ be the corresponding affine base scheme:  
all schemes are $S$-schemes. For $V$ a scheme over $S$ and $S'$ an $S$-scheme, let 
$V_{S'}$ (resp. ${}_{S'} V$) be the base change $V\times_S S'$ (resp. $S'\times_S V$). 
In particular, for $V$ a scheme over $S$ and $R'$ an $R$-algebra, we write $V_{R'}$ (resp. ${}_{R'} V$) instead of $V_{Spec(R')}$ (resp. ${}_{Spec(R')} V$) for the base change $V\times_S Spec(R')$ (resp. $Spec(R') \times_S V$). 
Given $S$-morphisms of schemes $V \rightarrow W$ and $Z\rightarrow W$, we have the fiber product

\centerline{
\xymatrix{
V\times_W Z \ar[d]_-{pr_1} \ar[r]^-{\,\,\,\,pr_2} & Z \ar[d]\\
 V \ar[r]    & W 
}
}  

\

\noindent
together with the natural projections.


\

Let $A=(A, \delta, \epsilon, s)$ be a flat, finitely presented commutative Hopf algebra over $R$ where $\delta$ denotes the comultiplicaton, $\epsilon$ the unit map and $s$ the antipode, 
$G$ the affine flat group scheme associated to $A$ over $S$ and $X:=Spec(B)$ an affine scheme over $S$. \\ 

\begin{remarque*} 
For simplicity, we only consider actions involving affine schemes even though most of the following results are true for actions involving general schemes by glueing. Only the definition of tameness by \cite{Boas} cannot be always generalized for actions involving the non-affine schemes (see \cite[\S 7]{Boas}). 
\end{remarque*}

An action of $G$ on $X$ over $S$ is denoted by $(X,G)$; we write 
$\mu_X :X\times_S G \rightarrow X$ for
its structure map, (we write $x \cdot g := \mu_X(x, g)$, for any $x \in X$ and $g\in G$) and $\rho_B : B \rightarrow B \otimes_RA$ for
the structure map giving the $A$-comodule structure on $B$ (see \cite[\S 1.b]{Boas}). \\ 

We will use the  \textbf{sigma notation}, well-established in the Hopf algebra literature. 
Specifically, for $a \in A$, we write $\delta (a) = \sum_{(a)} a_1 \otimes a_2$. This presentation itself is purely symbolic; the terms $a_1$ and $a_2$ do not stand for particular elements of A. The comultiplication $\delta$ takes values in $A\otimes_R A$, and we know that:
$$ 
\delta (a) = (a_{1,1} \otimes a_{1,2}) + (a_{2,1} \otimes a_{2,2}) + (a_{3,1} \otimes a_{3,2}) + \ldots + (a_{n,1} \otimes a_{n,2}),
$$
for some elements $a_{i,j}$ of $A$ and some integer $n$. 
The Sigma notation is a way to separate the $a_{i,1}$ from the $a_{j,2}$. In other words, 
the notation $a_1$ stands for the generic $a_{i,1}$ and the notation $a_2$ stands for the generic $a_{j,2}$. Similarly, for any right comodule (respectively left comodule) $M$, any $m \in M$, we write 
$\rho_M (m) =\sum m_{(0)} \otimes m_{(1)}$ (respectively ${}_M\rho(m) = \sum m_{(-1)}\otimes m_{(0)}$). \\

For $\mathfrak{p} \in X$, we denote $I_G ( \mathfrak{p})$ the \textbf{inertia group at $\mathfrak{p}$} to be the fiber product: 
$$\xymatrix{I_G (\mathfrak{p}) \ar[r]^{pr_2} \ar[d]_{pr_1} & Spec( k(\mathfrak{p}) )\ar[d]^{\Delta \circ \xi } \\ X \times_S G \ar[r]_{(\mu_X, pr_1)} & X \times_S X }$$
where $k(\mathfrak{p})$ is the residue field at $\mathfrak{p}$, $\xi : Spec(k(\mathfrak{p})) \rightarrow X$ is the morphism induced by the canonical morphism $B \rightarrow k(\mathfrak{p})$ and $(\mu_X , pr_1): X \times_S G \rightarrow X\times_S X$ is the \textbf{Galois map} sending $(x, g)$ to $( x\cdot g, x)$.\\ 

Let 
$$
C:=B^A= \{ b \in B | \rho_B (b) = b \otimes 1 \}
$$ 
be the ring of invariants for this action, $Y:= Spec (C)$ and $\pi : X \rightarrow Y$ 
the morphism induced by the inclusion $C\subseteq B$. \\
\begin{remarque*} We can also then consider $X$ as a scheme over $Y$. Then the data of the $S$-scheme morphism $\mu_X : X\times_S G\rightarrow X$ defining an action $(X, G)$ over $S$ is equivalent to the data of the $Y$-scheme morphism $\mu_X : X\times_Y G_Y \rightarrow X$ defining an action $(X, G_Y)$ over $Y$. For $Y'$ a $Y$-scheme, we denote $X_Y^{Y'}$ the base change to $Y'$ for $X$ considered as a $Y$-scheme.
\end{remarque*}

In the following, we will say that a $U$-scheme $Z$ together with a $G$-action over $S$ where $U$ is a $S$-scheme is a \textbf{$G$-torsor over $U$} if $Z \rightarrow U$ is fppf and the Galois map $(\mu_Z , pr_1): Z\times_S G \simeq Z \times_U Z$ is an isomorphism. (Here, we choose to work with the fppf topology which is the more reasonable topology to work under our assumptions). \\ 

We write $[X/G]$  for the quotient stack associated to this action. Recall, that it is defined on the $T$-points, where $T$ is a $S$-scheme as: 
$$[X/G](T) = \{ P \text{ is } G\text{-torsor over } T \text{ together with a $G$-equivariant morphism } P\rightarrow X\}$$
We get a canonical $S$-morphism $p: X \rightarrow [X/G]$ by sending a $T$-point $T \rightarrow X$ to trivial $G$-torsor $T \times_S G$ over $T$ together with the $G$-equivariant morphism $T\times_S G \rightarrow X$. Under the present hypothesis, $p$ is fppf and by the Artin criterium (see \cite[Théorème 10.1]{Laumon}), $[X/G]$ is an Artin stack. We denote $B_SG$ the \textbf{classifying stack} that is the quotient stack associated to the trivial action of $G$ on $S$. \\ 

Moreover, $X/G$ stands for the categorical quotient in the category of algebraic spaces, that is $X/G$ is an algebraic space, there is a morphism $\phi : X\to X/G$ such that $\phi\circ\mu_X=\phi\circ pr_1$ and for any algebraic space $Z$ and any morphism $\psi:X\to Z$ such that $\psi\circ\mu_X=\psi\circ pr_1$, $\psi$ factorizes via $\phi$, e.g. there is a morphism $X/G \rightarrow Z$ making the following diagram commute: 
$$\xymatrix{ X \ar[d]_{\phi} \ar[r]^{\psi} & Z \\ X/G \ar[ur] & }$$
If $G$ is finite, flat over $S$ then $X/G= Y$, by \cite[Theorem 3.1]{conrad}.
\begin{remarque*} The quotient stack $[X/G]$ can always be defined, as soon as $G$ is a flat group scheme, instead  $X/G$ does not exist necessarily. Even if $X/G$ does exist, the quotient stack $[X/G]$ gives more information about the action than the categorical quotient $X/G$. More precisely, the only difference between the two holds in the existence of automorphisms of points for the stack $[X/G]$, but these automorphisms correspond to the inertia groups for the action $(X,G)$. A slice theorem reducing, fppf locally, the action $(X,G)$ to an action by a lifting of an inertia group would imply that the data of the quotient stack associated to $X$ and $G$ is enough to rebuild locally the action $(X,G)$. This is the case for action of finite and étale group scheme (see Theorem \ref{ringslice}).
\end{remarque*}
We denote  by $\mathcal{M}^A$ (resp. ${}^A  \mathcal{M}$) the category of 
the right (resp. left) $A$-comodules  and by ($\mathcal{M}_R$ (resp. ${}_R \mathcal{M}$) 
the category of right (resp. left) $R$-modules). Let $(M,\rho_M )$ and $(N, \rho_N)$ be two right $A$-comodules. 
Denote by  $Com_A(M,N)$ the $R$-module of the $A$-comodule morphisms from $M$ to $N$. We recall that a $R$-linear map $g :M \rightarrow N$ is called \textbf{$A$-comodule morphism} if $\rho_N \circ g = (g \otimes Id_A) \circ \rho_M$.\\

We denote by ${}_B\mathcal{M}^A$ the category of $\textbf{(B,A)}$\textbf{-modules}. The objects are $R$-modules $N$ which are also right $A$-comodules and left $B$-modules such that the structural map of comodules is $B$-linear, i.e., 
$$
\rho_N (bn) =\rho_B(b) \rho_N(n)=\sum b_{(0)} n_{(0)} \otimes  b_{(1)}n_{(1)}, \quad \text{ for all } \quad 
n \in N, b\in B.
$$ 
The morphisms of ${}_B\mathcal{M}^A$ are morphisms that are 
simultaneously $B$-linear and $A$-comodule morphisms. 
For any $M, N\in {}_B\mathcal{M}^A$, we write ${}_BBim^A(M,N)$ 
for the set of such morphisms from $M$ to $N$.  
For  a $(B,A)$-module $M$, its \textbf{submodule of invariants} $(M)^A$ 
is defined by the exact sequence
$$
0 \rightarrow (M)^A \rightarrow  \raisebox{.7ex}{\xymatrix{ M \dar[r]^-*[@]{\hbox to 0pt{\hss\txt{ $\rho_M$  \\ }\hss}}_-{M\otimes 1} & M \otimes A  }}
$$ 
This defines the \textbf{functor of invariants} 
$$
(-)^A : {}_B\mathcal{M}^A \rightarrow {}_C\mathcal{M}
$$ 
which is left exact, by definition.
We denote $Qcoh^G (X)$ the $G$-equivariant quasi-coherent sheaves over $X$ (see \cite[2.1]{AOV}). We have that $Qcoh^G (X)\simeq {}_B\mathcal{M}^A$. 
For $\mathcal{X}$ an Artin stack, we denote by $Qcoh( \mathcal{X})$ the quasi-coherent sheaves over $\mathcal{X}$. \\ 
We have $Qcoh( [X/G]) \simeq Qcoh^G(X)$ (see \cite[2.1]{AOV} or  \cite[Chapter IV, \S 9]{these}).
\section{Definition of slices}
Here we recall the notion of slices introduced in \cite[Definition 3.1]{Boas}: 

\begin{definition} 
\label{slice}
We say that the action $(X,G)$ over $S$ admits \textbf{étale (respectively finitely presented flat) slices }if: 
\begin{enumerate} 
\item There is a categorical quotient $Q:=X/G$, in the category of algebraic spaces. 
\item For any $\mathfrak{q} \in Q$, there exist: 
\begin{enumerate} 
\item a $S$-scheme $Q'$ and an étale (resp. finitely presented flat) $S$-morphism $Q' \rightarrow Q$ such that there is $\mathfrak{q}' \in Q'$ which maps to $\mathfrak{q}$ via this morphism. 
\item a closed subgroup $G_\mathfrak{q}$ of $G_{Q'}$ over $Q'$ 
which stabilizes some point $\mathfrak{p}' $ of $X \times_Q Q'$ above $\mathfrak{q}' $, i.e.,  ${G_\mathfrak{q} }_{k(\mathfrak{p} ')}\simeq I_{G_{Q'}}(\mathfrak{p}' )$,
\item a $Q'$-scheme $Z$ with a $G_\mathfrak{q}$-action such that 
$Q' =Z/{G_\mathfrak{q}}$ and the action $(X\times_Q Q', G_{Q'})$ is induced by $( Z , G_\mathfrak{q})$.
\end{enumerate}
The subgroup $G_\mathfrak{q}$ is called a \textbf{slice group}.
\end{enumerate} 
\end{definition}

\begin{remarque}
\begin{enumerate}
\item Roughly speaking, for the étale (respectively fppf) topology, 
an action which admits slices can be described by the action of a lifting of an inertia group of a point.
\item The action $(X_{Q'}, G_{Q'})$ should be thought of as a $G$-stable neighborhood of an orbit. 
Such a neighborhood induced from an action $(Z, H)$, 
where $H$ is a lifting of an inertia group of a point, is called a \textit{\og tubular neighborhood \fg}.
\end{enumerate}
\end{remarque}

\section{Slice theorem for actions by finite étale (smooth) group scheme}
Since a finite and smooth group scheme is étale, and an étale group scheme is locally constant for the étale topology, it is enough to consider an action by a constant group scheme. \\ 
Let $\Gamma$ be an abstract finite group. Denote by 
$\underline{\Gamma}$ the constant group scheme associated to $\Gamma$ over $S$. The associated Hopf algebra is then of the form $Map ( \Gamma , R)$. 
We recall first some important facts:
\begin{enumerate} 
\item The data of an action of $G$ on $X$ is equivalent to the data of an 
action of $\Gamma$ on $B$, $C= B^\Gamma = \{ \gamma \in \Gamma | \gamma b = b \}$ (see \cite[\S 1.d]{Boas}) and $X/ \underline{\Gamma}= Y$ (see \cite[Theorem 3.1]{conrad}). 
This particular case also permits to make the transition from algebra and classic number theory to the algebraic geometry context and to understand later definitions.
\item For $\mathfrak{q} \in Y$, $\Gamma$ operates transitively 
on the prime ideals above $\mathfrak{q}$ (see \cite[Chap. V, \S 2, Theorem 2]{Bourbaki}).
\item The inertia group scheme at a point $\mathfrak{p} \in X$ is 
the constant group scheme associated to the algebraic inertia group 
$$\Gamma_0 ( \mathfrak{p})=\{ \gamma \in \Gamma | \gamma \mathfrak{p} = \mathfrak{p} \text{ and } \gamma . x= x, \  \forall x \in k(\mathfrak{p}) \}$$ for the action $\Gamma$ on $B$ induced by $(X,G)$ at the ideal 
$\mathfrak{p}$ (see  \cite[III, \S 2, n° 2, Example 2.4]{DemGab}).
\end{enumerate}
In order to alleviate writing, we use that the morphism $B \rightarrow C$ is integral (see \cite[Chap. V, \S1, n°9, Proposition 22]{Bourbaki}).Let $\mathfrak{q}$ be a prime ideal on $C$, if we write $C_{\mathfrak{q}}^{sh}$ for the strict henselization of $C$ at $\mathfrak{q}$, then the base change $B \otimes_C C_{\mathfrak{q}}^{sh} \rightarrow C_{\mathfrak{q}}^{sh}$ is also integral 
 and since we want a result locally for the étale topology, without loss of generality, we suppose in the following that the base is local strictly henselian with maximal ideal $\mathfrak{q}$ and that $B$ is semi-local over $C$ that is, the product of its local component $(B_\mathfrak{m})$ where $\mathfrak{m}$ are runs though the set of the prime (maximal) ideals of $B$ over the maximal ideal $\mathfrak{q}$ of $C$. Let $\mathfrak{p}$ be a prime ideal over $\mathfrak{q}$. We define $Map^{\Gamma_0(\mathfrak{p})} (\Gamma , B_{\mathfrak{p}})$ to be the set of maps 
$$
u :\Gamma\to B_{\mathfrak{p}}
$$ 
such that $u(\gamma i) =i^{-1}u(\gamma)$, for any $ \gamma \in \Gamma$ and $i \in \Gamma_0 ( \mathfrak{P})$;
it carries a $\Gamma$-action via 
$$\lambda . u( \gamma)= u(\lambda^{-1} \gamma ),\quad \text{ for any }\quad 
\lambda, \gamma \in \Gamma.
$$ 
With these notations, we obtain easily the following $\Gamma$-equivariant isomorphism: 
$$(Map(\Gamma, C) \otimes_C B_\mathfrak{p})^{\Gamma_0 ( \mathfrak{p})} \simeq Map^{\Gamma_0(\mathfrak{p})}(\Gamma, B_\mathfrak{p})$$
With all the previous notations, one obtains that the action of $\Gamma$ on $B$ can be rebuilt locally at $\mathfrak{q}$ thanks to the action of an inertia group at $\mathfrak{p}$ on $B_\mathfrak{p}$:
\begin{lemme}\label{ringslice}\cite[Chapitre X]{raynaud}
We have a canonical isomorphism $\phi$, compatible with the actions of $\Gamma$ defined by $$\begin{array}{lrll} \phi : &  B& \rightarrow & Map^{\Gamma_0 ( \mathfrak{p})} (\Gamma ,  B_{\mathfrak{p}})\\ & b & \mapsto & u : \gamma \mapsto ( \gamma^{-1} b )_\mathfrak{p} \end{array}$$
Moreover, $C \simeq B_\mathfrak{p}^{\Gamma_0( \mathfrak{p})}$.
\end{lemme}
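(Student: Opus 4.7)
The plan is to identify $B$, via its semi-local decomposition, with a product indexed by cosets of $\Gamma_0(\mathfrak{p})$ in $\Gamma$, and then to recognize this product as $Map^{\Gamma_0(\mathfrak{p})}(\Gamma, B_\mathfrak{p})$. First, since $C$ is strictly henselian, the residue field $k(\mathfrak{q})$ is separably closed; hence $k(\mathfrak{p})/k(\mathfrak{q})$ is purely inseparable and every $k(\mathfrak{q})$-automorphism of $k(\mathfrak{p})$ is trivial, so every $\gamma \in \Gamma$ stabilizing $\mathfrak{p}$ acts trivially on $k(\mathfrak{p})$. The decomposition group at $\mathfrak{p}$ therefore coincides with $\Gamma_0(\mathfrak{p})$. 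Combined with the transitivity of $\Gamma$ on the primes of $B$ above $\mathfrak{q}$ (fact~(2) recalled above), the orbit-stabilizer theorem yields a $\Gamma$-equivariant bijection between $\Gamma/\Gamma_0(\mathfrak{p})$ and the set of maximal ideals of $B$ above $\mathfrak{q}$, via $\gamma\Gamma_0(\mathfrak{p}) \mapsto \gamma\mathfrak{p}$.

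Since $B$ is semi-local, $B \simeq \prod_\mathfrak{m} B_\mathfrak{m}$; combining with the previous step gives $B \simeq \prod_{\gamma \in \Gamma/\Gamma_0(\mathfrak{p})} B_{\gamma\mathfrak{p}}$. For each representative $\gamma$, the action of $\gamma^{-1}$ on $B$ induces an isomorphism $B_{\gamma\mathfrak{p}} \to B_\mathfrak{p}$, and changing the representative from $\gamma$ to $\gamma i$ with $i \in \Gamma_0(\mathfrak{p})$ modifies this isomorphism by the automorphism $i^{-1}$ of $B_\mathfrak{p}$. A coherent choice of element of $B_\mathfrak{p}$ per coset is therefore exactly a map $u : \Gamma \to B_\mathfrak{p}$ satisfying $u(\gamma i) = i^{-1} u(\gamma)$, i.e.\ an element of $Map^{\Gamma_0(\mathfrak{p})}(\Gamma, B_\mathfrak{p})$. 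Assembling these canonical identifications produces an isomorphism $B \simeq Map^{\Gamma_0(\mathfrak{p})}(\Gamma, B_\mathfrak{p})$ which, after unwinding, is given precisely by $b \mapsto \bigl(\gamma \mapsto (\gamma^{-1} b)_\mathfrak{p}\bigr)$, i.e.\ by $\phi$.

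The $\Gamma$-equivariance is a direct computation: $(\lambda \cdot \phi(b))(\gamma) = \phi(b)(\lambda^{-1}\gamma) = ((\lambda^{-1}\gamma)^{-1} b)_\mathfrak{p} = (\gamma^{-1}\lambda b)_\mathfrak{p} = \phi(\lambda b)(\gamma)$. For the final assertion I take $\Gamma$-invariants on both sides of $\phi$: $\Gamma$-invariance of $u$ (via $\lambda \cdot u(\gamma) = u(\lambda^{-1}\gamma)$) forces $u$ to be constant, and the $\Gamma_0(\mathfrak{p})$-equivariance then forces the common value $u(1)$ to lie in $B_\mathfrak{p}^{\Gamma_0(\mathfrak{p})}$; hence $C = B^\Gamma \simeq B_\mathfrak{p}^{\Gamma_0(\mathfrak{p})}$. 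The main subtlety in the argument is the identification of decomposition and inertia in the strictly henselian setting, which is what makes the orbit of $\mathfrak{p}$ match the number of local factors of $B$; everything beyond that is tracking a chain of canonical isomorphisms.
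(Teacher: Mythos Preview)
Your argument is correct and follows essentially the same route as the paper: decompose $B$ as the product of its local components, use transitivity of $\Gamma$ on the primes above $\mathfrak{q}$ to index these by cosets, and read off that the resulting identification is precisely $\phi$. The paper's proof is terser---it fixes coset representatives $\gamma_1,\dots,\gamma_s$, writes $b=(\gamma_1 b_1,\dots,\gamma_s b_s)$, and checks directly that $(\gamma_j^{-1}b)_\mathfrak{p}=b_j$---but the content is the same. One point you make explicit that the paper leaves implicit is the equality of the decomposition and inertia groups in the strictly henselian setting; this is what ensures the number of local factors matches $[\Gamma:\Gamma_0(\mathfrak{p})]$, so that $\phi$ is surjective and not merely injective. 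Your derivation of $C\simeq B_\mathfrak{p}^{\Gamma_0(\mathfrak{p})}$ by taking $\Gamma$-invariants on both sides is equivalent to the paper's evaluation-at-$1_\Gamma$ argument.
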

\begin{proof} 
Since $B_\mathfrak{p} \simeq i B_\mathfrak{p}$, for any $i \in \Gamma_0 ( \mathfrak{p})$, $\phi$ is well-defined.
Let $\mathfrak{p}_1$, ....,$\mathfrak{p}_s$ be the prime ideals over $\mathfrak{q}$, we put $\mathfrak{p}_1:= \mathfrak{p}$. Since the action of $\Gamma$ is transitive on the $\mathfrak{p}_i$'s, for any $i \neq 1$, we have a $\gamma_i \in \Gamma$, such that $\mathfrak{p}_i = \gamma_i \mathfrak{p}$. Moreover, $B_{\mathfrak{p}_j}\simeq\gamma_j B_\mathfrak{p}$. 
Since we suppose $C$ strictly henselian and $B$ is integral over $C$, then we know that $B = \oplus_{i=1}^s B_{\mathfrak{p}_i}\simeq\oplus_{i=1}^s \gamma_i B_{\mathfrak{p}}$. As a consequence, any $b\in B$ can be written uniquely as $({\gamma_1}b_1 , {\gamma_2} b_2, ...,{ \gamma_s} b_s)$ in $\oplus_{i=1}^s \gamma_i B_{\mathfrak{p}}$ with $b_i \in B_\mathfrak{p}$. Each $\gamma_i$ acts on this representation, by left translation and permuting the term on the direct sum.
In particular, we obtain $(\gamma_j^{-1} b)_{\mathfrak{P}} = \gamma_j^{-1} \gamma_j b_j = b_j$,  and this proves that $\phi$ is an isomorphism.
Finally, the composite map $$\begin{array}{ccccc}B &\simeq^{\phi} &Map^{\Gamma_0 ( \mathfrak{p})}(\Gamma , B_\mathfrak{p})& \rightarrow &B_\mathfrak{p} \\ &&u & \mapsto & u(1_\Gamma )\end{array}$$ induces an isomorphism between $C$ and $B_\mathfrak{p}^{\Gamma_0 ( \mathfrak{p})}$.
\end{proof}
Rewriting the previous theorem in algebro-geometric terms gives us exactly étales slices for any action by a finite étale group scheme.
\begin{theoreme}\label{constantslice}
An action of a finite étale group scheme $G$ on $X$ over $S$ admits étale slices.  
\end{theoreme}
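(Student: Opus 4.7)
The plan is to deduce the theorem directly from Lemma \ref{ringslice}, after one étale localization to trivialize $G$ and a second to pass from the strict henselization implicit in that lemma to an honest étale neighborhood.

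First, since a finite étale group scheme becomes constant after a suitable étale base change on $S$, and since the property of admitting étale slices is itself local on the quotient for the étale topology, one may reduce to the case $G=\underline{\Gamma}$ for some abstract finite group $\Gamma$. In this situation, the categorical quotient $Q:=X/G = Y = Spec(C)$ already exists by the remarks preceding Lemma \ref{ringslice}.

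Next, fix $\mathfrak{q}\in Q$ and a prime $\mathfrak{p}$ of $B$ above $\mathfrak{q}$. Writing $C_{\mathfrak{q}}^{sh}$ for the strict henselization of $C$ at $\mathfrak{q}$, Lemma \ref{ringslice}, applied with $C$ replaced by $C_{\mathfrak{q}}^{sh}$ and $B$ by $B\otimes_C C_{\mathfrak{q}}^{sh}$, supplies a canonical $\Gamma$-equivariant isomorphism
$$
B\otimes_C C_{\mathfrak{q}}^{sh}\;\xrightarrow{\sim}\;Map^{\Gamma_0(\mathfrak{p})}\bigl(\Gamma,\,(B\otimes_C C_{\mathfrak{q}}^{sh})_{\mathfrak{p}}\bigr),
$$
together with the identification $C_{\mathfrak{q}}^{sh}\simeq (B\otimes_C C_{\mathfrak{q}}^{sh})_{\mathfrak{p}}^{\Gamma_0(\mathfrak{p})}$. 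Setting $G_\mathfrak{q}:=\underline{\Gamma_0(\mathfrak{p})}$, viewed as a closed subgroup scheme of $G$, and $Z:=Spec\bigl((B\otimes_C C_{\mathfrak{q}}^{sh})_{\mathfrak{p}}\bigr)$, the second identification gives $Z/G_\mathfrak{q}=Spec(C_{\mathfrak{q}}^{sh})$, while the displayed isomorphism exhibits $X\times_Q Spec(C_{\mathfrak{q}}^{sh})$ as the space induced from the $G_\mathfrak{q}$-action on $Z$ in the sense of Definition \ref{slice}(2)(c). All three conditions of that definition are therefore satisfied, provided one is allowed to take $Q'=Spec(C_{\mathfrak{q}}^{sh})$.

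The remaining point, and what I expect to be the main technical obstacle, is that $Spec(C_{\mathfrak{q}}^{sh})\to Y$ is only pro-étale. I would remove this defect by a standard spreading out argument: because $\Gamma$ is finite, the isomorphism $\phi$ of Lemma \ref{ringslice} and the subgroup $\Gamma_0(\mathfrak{p})$ are determined by finitely many equations, and since $C_{\mathfrak{q}}^{sh}$ is the filtered colimit of étale $C$-algebras realizing étale neighborhoods of $\mathfrak{q}$, all the slice data descends to some honest étale neighborhood $Q'\to Y$ of $\mathfrak{q}$. One must verify that both the invariance statement $Z/G_\mathfrak{q}=Q'$ and the Galois type isomorphism survive at a finite stage of the colimit, but these are routine limit arguments. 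Everything else in the proof is a direct translation of Lemma \ref{ringslice} into the geometric language of Definition \ref{slice}.
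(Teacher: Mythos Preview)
Your proposal is correct and follows essentially the same route as the paper: reduce to $G=\underline{\Gamma}$ by \'etale localization, apply Lemma~\ref{ringslice} over the strict henselization $C_{\mathfrak{q}}^{sh}$ to produce the slice data, and then descend to an honest \'etale neighborhood of~$\mathfrak{q}$. The paper is slightly terser about the descent step, simply invoking finiteness of $X\to Y$ to choose an \'etale local subextension $C'\subset C_{\mathfrak{q}}^{sh}$ over which the decomposition of Lemma~\ref{ringslice} already holds, whereas you phrase it as a standard spreading out argument; these are the same idea.
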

\begin{proof} First, $Y=X/G$ is a categorical quotient in the category of algebraic space (see \cite[Theorem 3.1]{conrad}).  The rest of the proof is a direct consequence of the previous theorem since as we already mentioned we can assume without loss of generality that $G$ is constant of the form $\underline{\Gamma}$ using the previous notation: take $\mathfrak{q} \in Y$ and $\mathfrak{p} \in X$  over $\mathfrak{q}$. 
Write $C_{\mathfrak{q}}^{hs}$ for the strict Henselization of $C$ at $\mathfrak{q}$. In Definition \ref{slice}, since $X\rightarrow Y$ is finite, we can take $Y'=Spec(C')$ as an étale subextension of $Spec (C_{\mathfrak{q}}^{hs})$ over $Y$ containing $\mathfrak{q}$ in its image with $C'$ a local ring, the slice group $G_\mathfrak{q}$ to be the constant group scheme associated to $\Gamma_0 ( \mathfrak{p})$ over $Y'$ and $Z :=Spec ((B\otimes_C C')_\mathfrak{m})$, where $\mathfrak{m}$ is a prime ideal above the maximal ideal of the local ring  $C'$ corresponding to $\mathfrak{q}$.
\end{proof}
\section{Unramified case}
\subsection{Definitions} 
\begin{definition}
We say that the action $(X,G)$ is \textbf{unramified at $\mathfrak{p}\in X$} if the inertia group $I_G(\mathfrak{p})$ is trivial. We say that the action $(X,G)$ is \textbf{unramified} if $(X, G)$ is unramified at any $\mathfrak{p}$. We say the action is free if the Galois morphism $(\mu_X, pr_1): X\times_SG \rightarrow X\times_S X$ is a closed immersion. 
\end{definition}
\begin{lemme} \cite[III, \S 2, n° 2, 2.2]{DemGab}
The following assertions are equivalent: 
\begin{enumerate}
\item $(X, G)$ is free.
\item  $(X, G)$ is unramified.
\end{enumerate}
\end{lemme}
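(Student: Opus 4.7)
My plan is as follows. The direction $(1)\Rightarrow(2)$ is a formal base-change argument. The inertia $I_G(\mathfrak{p})$ is defined as the pullback of the Galois morphism along $\Delta\circ\xi:Spec(k(\mathfrak{p}))\to X\times_S X$, and closed immersions are stable under base change; hence if the Galois morphism is a closed immersion, so is $I_G(\mathfrak{p})\to Spec(k(\mathfrak{p}))$. Since $k(\mathfrak{p})$ is a field the only closed subschemes of $Spec(k(\mathfrak{p}))$ are itself and the empty scheme, and the identity section of $G$ gives a rational point of $I_G(\mathfrak{p})$, forcing $I_G(\mathfrak{p})\simeq Spec(k(\mathfrak{p}))$: the action is unramified at $\mathfrak{p}$.

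For $(2)\Rightarrow(1)$, I would proceed in two steps: first show the Galois map is a monomorphism, then upgrade to a closed immersion. Introduce the \emph{global stabilizer} $St:=(X\times_S G)\times_{X\times_S X} X\subseteq X\times_S G$, the pullback along the diagonal $\Delta:X\to X\times_S X$. Its scheme-theoretic fibers over points $\mathfrak{p}\in X$ are precisely the inertia groups $I_G(\mathfrak{p})$, and it contains the identity section as a closed subscheme isomorphic to $X$. Using the group structure of $G$ and the change of variables $h=g_1 g_2^{-1}$, the double fibre product $(X\times_S G)\times_{X\times_S X}(X\times_S G)$ identifies with $St\times_X(X\times_S G)$; under this identification the diagonal of the Galois map corresponds to the inclusion of the identity section into $St$. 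Thus the Galois morphism is a monomorphism if and only if $St$ coincides with the identity section of $X\times_S G$ over $X$.

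Under the unramified hypothesis every fibre of $St\to X$ is trivial; together with the finite presentation and flatness inherited from $G\to S$, a fibrewise Nakayama-type argument forces the closed immersion $X\hookrightarrow St$ to be an isomorphism. Hence the Galois map is a monomorphism. To finally upgrade the monomorphism to a closed immersion, I would invoke that the Galois map is affine and can be regarded as a flat monomorphism of finite presentation in the sense of EGA IV; together with the separation properties of affine morphisms this yields the closed immersion. The main obstacle is precisely this last step: passing from pointwise triviality of the inertia (which, together with the group action, immediately gives a monomorphism) to the topological and schematic statement that the Galois morphism is a closed immersion; this is exactly the content of the cited result of Demazure-Gabriel, and it is where the flatness and finite presentation hypotheses on $G$ enter essentially.
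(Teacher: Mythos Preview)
The paper gives no proof of this lemma; it simply records the citation to Demazure--Gabriel. So there is no argument in the paper to compare yours against, and your attempt is a genuine effort to supply what the paper takes as a black box. Your direction $(1)\Rightarrow(2)$ is correct and cleanly argued.

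For $(2)\Rightarrow(1)$ there are two real gaps. First, you assert that $St$ inherits ``finite presentation and flatness'' from $G\to S$; but $St$ is merely a closed subscheme of $X\times_S G$, and closed subschemes do not inherit flatness over the base. Your Nakayama step (fibrewise triviality of $St\to X$ forces the identity section $X\hookrightarrow St$ to be an isomorphism) does go through if $St\to X$ is \emph{finite}, because then the augmentation ideal is a finitely generated $B$-module and vanishing of all its fibres forces it to vanish. That finiteness is automatic when $G$ is finite over $S$ --- the hypothesis imposed in the very next theorem --- but under the paper's standing hypotheses $G$ is only flat and finitely presented, and your argument as written does not close.

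Second, your upgrade from monomorphism to closed immersion is wrong as stated. A flat monomorphism locally of finite presentation is an \emph{open} immersion (EGA~IV, 17.9.1), not a closed one; and in any case you have not explained why the Galois map should be flat. The correct mechanism, again when $G$ is finite over $S$, is that the Galois map is then proper as an $X$-morphism (the source is finite over $X$, the target separated over $X$), and a proper monomorphism is a closed immersion. Your closing sentence effectively hands this step back to Demazure--Gabriel, which is the very result under discussion, so the argument becomes circular at exactly the point where the work lies.
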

\subsection{Global slice theorem} We already have the following theorem. 
\begin{theoreme}
 \cite[III, \S 2, n°2 \& n°3]{DemGab} \label{locfree} 
Suppose that $G$ is finite, flat over $S$. The following assertions are equivalent: 
\begin{enumerate} 
\item $(X,G)$ is unramified. 
\item $X$ is a $G$-torsor over $Y$. 
  \end{enumerate}
\end{theoreme}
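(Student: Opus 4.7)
The implication (2) $\Rightarrow$ (1) is immediate. If $X$ is a $G$-torsor over $Y$, the Galois map $(\mu_X, pr_1)$ factors as the isomorphism $X\times_S G \simeq X\times_Y X$ followed by the canonical monomorphism $X\times_Y X\hookrightarrow X\times_S X$; since $\pi : X\to Y$ is finite and hence separated, this latter map is a closed immersion. Consequently the fiber of $pr_1 : X\times_S G\to X$ over any geometric point injects into the corresponding fiber of $\pi$, forcing every inertia group $I_G(\mathfrak{p})$ to be trivial; by the preceding lemma, the action is free, hence unramified.

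For the substantive direction (1) $\Rightarrow$ (2), my plan is to reduce to the affine situation, to prove that $\pi : X\to Y$ is faithfully flat of constant rank $\mathrm{rank}_R(A)$, and then to upgrade the Galois map to an isomorphism via a rank comparison. Since $G$ is finite flat and $X=Spec(B)$ is affine, $Y=Spec(C)=X/G$ exists and $\pi$ is finite by \cite[Theorem 3.1]{conrad}. The unramified hypothesis, via the preceding lemma, says that the Galois map is a closed immersion; its image automatically lies in $X\times_Y X$, so at the algebra level the canonical map
$$ \Psi : B\otimes_C B \twoheadrightarrow B\otimes_R A, \quad b\otimes b'\mapsto \rho_B(b)\cdot(b'\otimes 1), $$
is surjective. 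To establish the faithful flatness of $\pi$, I would argue pointwise: for $\mathfrak{q}\in Y$ and $\mathfrak{p}\in X$ above it, $G$ acts on the geometric fiber $X_{\bar\mathfrak{q}}$ transitively (by the categorical quotient property) with trivial stabilizers (by unramifiedness), so this fiber is $G$-equivariantly isomorphic to $G_{\overline{k(\mathfrak{q})}}$ and therefore has dimension $\mathrm{rank}_R(A)$ over $k(\mathfrak{q})$. Since $B$ is a finite $C$-module (the inclusion $C\hookrightarrow B$ is integral and $C$ is noetherian by Eakin-Nagata), constancy of fiber dimension yields that $B$ is locally free of rank $\mathrm{rank}_R(A)$ over $C$. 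Both $X\times_S G$ and $X\times_Y X$ are then finite flat of rank $\mathrm{rank}_R(A)$ over $X$ via $pr_1$, so the closed immersion $X\times_S G\hookrightarrow X\times_Y X$ is forced to be an isomorphism, yielding the torsor property.

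The main obstacle is precisely the passage from freeness of the action to flatness of $\pi$: descent-based arguments are circular, since fppf descent invokes the very flatness one is trying to prove. Breaking this circularity requires the pointwise fiber analysis sketched above, which is carried out in detail in \cite[III, \S 2, n$^\circ$ 2 \& 3]{DemGab} using the Hopf-algebraic language.
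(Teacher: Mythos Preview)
The paper does not give its own proof of this theorem: the statement is simply attributed to \cite[III, \S 2, n$^\circ$2 \& n$^\circ$3]{DemGab} and nothing further is written. Your sketch therefore already goes beyond what the paper offers, and your final paragraph---identifying flatness of $\pi$ as the crux and deferring its verification to the same reference---is in effect exactly the paper's treatment of the whole result.

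One comment on the substance of your sketch, since you do attempt more. The step ``constancy of fiber dimension yields that $B$ is locally free over $C$'' is not valid in the stated generality: for a finite module over a noetherian ring, locally constant fiber rank implies local freeness only when the base is reduced (the standard counterexample being $C=k[\varepsilon]/(\varepsilon^2)$, $B=C/(\varepsilon)$). Since nothing here forces $C$ to be reduced, this inference is precisely the gap you flag, not a separate step that could be completed by the fiber count alone. The argument in Demazure--Gabriel closes it differently: one first proves that $\Psi: B\otimes_C B \to B\otimes_R A$ is an \emph{isomorphism} (not merely a surjection) by combining the surjectivity with a length/dimension bound on the fibers coming from transitivity of the $G$-action on $X_{\bar{\mathfrak q}}$, and then reads off flatness of $B$ over $C$ from the flatness of $B\otimes_R A$ over $B$. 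So your instinct to defer that step is correct, and the deferral is essential rather than cosmetic.
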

\subsection{Local slice theorem}
\begin{lemme}  Suppose that $S=Y$ is local, with maximal ideal $\mathfrak{q}$. The following assertions are equivalent:
\begin{enumerate}
\item $(X,G)$ is a free action. 
\item $I_G( \mathfrak{p})$ is trivial for some $\mathfrak{p}$ over $\mathfrak{q}$.
\end{enumerate}
\end{lemme}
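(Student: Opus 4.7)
The implication $(1) \Rightarrow (2)$ is immediate from the preceding lemma: if $(X,G)$ is free, then the Galois map $(\mu_X, pr_1)$ is a closed immersion, so every fiber $I_G(\mathfrak{p})$ of the stabilizer subscheme $X\times_{X\times_S X}(X\times_S G)\to X$ is trivial, in particular at any $\mathfrak{p}$ lying over $\mathfrak{q}$.

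For the converse, my plan is to check that the Galois map $(\mu_X, pr_1):X\times_S G\to X\times_S X$ is a closed immersion by reducing to its closed fiber via Nakayama. Under the standing hypothesis of this section that $G$ is finite flat, $X\to Y$ is finite flat (since $Y=X/G$ by \cite[Theorem 3.1]{conrad}), so both $X\times_S G$ and $X\times_S X$ are finite flat over $Y=S$. The Galois map, being a morphism between two schemes each finite over $Y$, is itself finite (factor it through the graph inside $(X\times_S X)\times_Y(X\times_S G)$), so being a closed immersion amounts to surjectivity of the corresponding map of finite $\mathcal{O}_Y$-modules. By Nakayama for the local ring $C=\mathcal{O}_Y$, this surjectivity can be tested modulo $\mathfrak{q}$.

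It therefore remains to show that the Galois map is a closed immersion on the closed fiber, or equivalently (by the preceding lemma applied over $Spec(k(\mathfrak{q}))$) that every inertia group of the induced action of $G_{k(\mathfrak{q})}$ on $X_{k(\mathfrak{q})}$ is trivial. The categorical quotient of this fiber action is $Spec(k(\mathfrak{q}))$, since $Y=X/G$ is local with closed point $\mathfrak{q}$. After a further base change to an algebraic closure of $k(\mathfrak{q})$, the group $G$ thus acts transitively on the geometric points of $X_{k(\mathfrak{q})}$, and inertia groups at points in a common orbit are conjugate. Consequently the assumed triviality of $I_G(\mathfrak{p})$ at a single $\mathfrak{p}$ over $\mathfrak{q}$ propagates, by conjugation followed by faithfully flat descent, to every point of $X_{k(\mathfrak{q})}$, and Nakayama concludes.

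The delicate technical point is the transitivity of $G$ on the geometric points of the closed fiber; this must be extracted from the quotient property $Y=X/G$ restricted to $Spec(k(\mathfrak{q}))$, which is a direct geometric analogue of \cite[Chap. V, \S 2, Theorem 2]{Bourbaki} and reduces, after trivializing $G_{k(\mathfrak{q})}$ over a suitable extension, to the constant group scheme case already invoked for Lemma \ref{ringslice}. Everything else in the argument (finiteness of the Galois map, applicability of Nakayama, and base change of the preceding lemma to a field) is formal.
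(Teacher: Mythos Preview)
Your proof is correct and follows essentially the same route as the paper: reduce the closed-immersion condition on the Galois map to its closed fiber via Nakayama (using that $B\otimes_R B$ and $B\otimes_R A$ are finite $R$-modules), and on the fiber use transitivity/conjugacy of inertia groups to propagate triviality from one prime over $\mathfrak{q}$ to all of them. The paper phrases the transitivity step as ``all the prime ideals over $\mathfrak{q}$ are conjugate up to a finitely presented flat finite base change,'' which is exactly what you spell out more carefully via passage to the algebraic closure and descent; your flagging of this as the one non-formal point matches the paper's own level of detail. One minor remark: you invoke flatness of $X\to Y$, but this is not needed (and not asserted in the paper)---finiteness alone suffices for the Nakayama step.
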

\begin{proof} The first implication follows from the definition. 
Let us prove that $(2) \Rightarrow (1)$. Let $k (\mathfrak{q})= R/\mathfrak{q}$ be the residue field of $R$ at $\mathfrak{q}$ and $f : X \rightarrow S$ be the quotient morphism: since $G$ is finite, $f$ is finite. Set $S_0:= Spec(k (\mathfrak{q}))$ and $X_0 := f^{-1} (\mathfrak{q}) = \{ \mathfrak{p}_1 , .., \mathfrak{p}_r\}= X \times_S S_0$ where for any $ i \in 1,..., r, \ \mathfrak{p}_i$ are the primes above $\mathfrak{q}$. Since the inertia is trivial at some prime over $\mathfrak{q}$, it is trivial at any prime over $\mathfrak{q}$ since all the prime ideals over $\mathfrak{q}$ are conjugate up to a finitely presented flat finite base change. Then, up to this finitely presented flat finite base change, the action $(X_{S_0} , G_{S_0})$ is free, in other words $X_0 \times_{S_0} G_{S_0} \rightarrow X_0 \times_{S_0} X_0$ is a closed immersion (see \cite[III, \S 2, n°2, Proposition 2.2]{DemGab}). That is, we have the surjection 
$$
(B\otimes_R B) \otimes_R R/\mathfrak{q} \rightarrow (B \otimes_R A) \otimes_R R/ \mathfrak{q}.
$$ 
Since $B\otimes_R B$ and $B\otimes_RA $ are finite $R$-algebras, by Nakayama's lemma, 
$B\otimes_R B \rightarrow B \otimes_RA$ is a surjection, hence the action is free.
\end{proof}
From this lemma, we can deduce easily the following theorem which is a local slice theorem for free action:
\begin{theoreme} \label{locfree0} Suppose that $G$ is finite and flat over $S$. Let $\mathfrak{p} \in X$ and $\mathfrak{q}\in Y$ its image via the morphism $\pi : X \rightarrow Y$. The following assertions are equivalent: 
\begin{enumerate} 
\item The inertia group scheme is trivial at $\mathfrak{p}$, up to a finite base change. 
\item There is an finite, flat morphism $ Y' \rightarrow Y$ containing $\mathfrak{q}$ in its image such that the action $(X_{Y'}^Y , G_{Y'})$ over $Y'$ is free. In other words, $(X_{Y'}^Y \times_Y X_{Y'}^Y, G_{X_{Y'}})$ is induced by the action $(X_{Y'}^Y \times_{Y'} X_{Y'}^Y,e_{X_{Y'}^Y})$, where $e_{X_{Y'}^Y}$ denotes the trivial group scheme over $X_{Y'}^Y$.  \end{enumerate}
\end{theoreme}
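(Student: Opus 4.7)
For $(2) \Rightarrow (1)$, the argument is immediate: freeness of $(X_{Y'}^Y, G_{Y'})$ forces every inertia group to be trivial, including that at any prime of $X_{Y'}^Y$ lying above $\mathfrak{p}$, so the morphism $Y' \to Y$ itself serves as the finite flat base change witnessing $(1)$.

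For the nontrivial direction $(1) \Rightarrow (2)$, my plan is first to pull back along the finite flat cover supplied by $(1)$, then to apply the preceding lemma after localization at $\mathfrak{q}$, and finally to spread the resulting freeness from the local ring back to an affine open neighborhood of $\mathfrak{q}$. Using $(1)$, fix a finite flat morphism $Y_0 \to Y$ with $\mathfrak{q}$ in its image together with a point $\mathfrak{p}_0 \in X \times_Y Y_0$ above $\mathfrak{p}$ such that $I_{G_{Y_0}}(\mathfrak{p}_0)$ is trivial, and let $\mathfrak{q}_0$ be its image in $Y_0$. Replacing $(Y, X, G, \mathfrak{p}, \mathfrak{q})$ by $(Y_0, X \times_Y Y_0, G_{Y_0}, \mathfrak{p}_0, \mathfrak{q}_0)$, I may assume from the outset that the inertia at $\mathfrak{p}$ is already trivial over $Y$ itself.

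Now localize $Y$ at $\mathfrak{q}$: the prime $\mathfrak{p}$ lies over the closed point of $\mathrm{Spec}(C_\mathfrak{q})$ with trivial inertia, so the preceding lemma applies to the base-changed action and yields that the Galois map $B \otimes_R B \to B \otimes_R A$ is surjective after localizing at $\mathfrak{q}$. Since $G$ is finite and flat, $B$ is a finite $C$-module, hence the cokernel of $B \otimes_R B \to B \otimes_R A$ is a finitely generated $C$-module whose support is a closed subset of $Y$ missing $\mathfrak{q}$; it therefore vanishes on an affine open neighborhood $V \ni \mathfrak{q}$ in $Y$. Over $V$ the Galois map is a closed immersion, so the action $(X \times_Y V, G_V)$ is free, and taking $Y' := V$ (flat and of finite presentation over $Y$, with $\mathfrak{q}$ in its image) concludes the argument.

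The main obstacle, really a minor one, is reconciling the qualifier \emph{finite, flat} appearing in $(2)$ with the open immersion $V \hookrightarrow Y$ produced by spreading out. I read the statement in the paper's fppf spirit as really meaning \emph{finitely presented flat}, since an honest finite flat covering of a neighborhood of $\mathfrak{q}$ cannot in general be extracted purely from a local triviality of the inertia without additional input.
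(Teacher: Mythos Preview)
Your argument is correct and follows the same skeleton as the paper's: both reduce to the preceding local lemma. The paper's proof is terser---it simply asserts ``From the previous lemma, there is a finite, flat morphism $Y' \to Y$ containing $\mathfrak{q}$ in its image such that $(X_{Y'}^Y, G_{Y'})$ is free''---without spelling out how one passes from freeness over the local ring $C_{\mathfrak{q}}$ to freeness over an actual neighborhood. Your spreading-out step (support of the cokernel of the Galois map is closed and misses $\mathfrak{q}$) makes this passage explicit and is the right way to fill that gap.

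Your closing remark about the qualifier is well taken: the open neighborhood $V$ you produce (composed with the finite flat $Y_0 \to Y$ from hypothesis $(1)$) is finitely presented and flat over $Y$, not finite over $Y$, and the paper's own proof does not produce anything finer. The ambient framework of the paper (Definition~\ref{slice}, the fppf conventions fixed at the outset) supports your reading of ``finite, flat'' as ``finitely presented flat'' here. One minor omission: the paper's proof also unpacks the ``In other words'' clause of $(2)$ by exhibiting the $G_{Y'}$-equivariant isomorphism $(\mu_{X_{Y'}^Y}, pr_1): X_{Y'}^Y \times_{Y'} G_{Y'} \simeq X_{Y'}^Y \times_{Y'} X_{Y'}^Y$ coming from the torsor structure (Theorem~\ref{locfree}); you skip this, but it is just a restatement of freeness once $G_{Y'}$ is finite.
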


\begin{proof}
$(1) \Rightarrow (2)$ From the previous lemma, there is an finite, flat morphism $Y' \rightarrow Y$ containing $\mathfrak{q}$ in its image such that $(X_{Y'}^Y, G_{Y'})$ is free, and since $G_{Y'}$ is finite over $Y'$, $X_{Y'}$ is a $G_{Y'}$-torsor over $Y'$ (see Theorem \ref{locfree}) hence  $X_{Y'}^Y$ is an fppf morphism over $Y'$, there is a $G_{Y'}$-equivariant isomorphism 
$$(\mu_{X_{Y'}^Y} , pr_1 ): (X_{Y'}^Y \times_{Y'} G_{Y'})/ e_{X_{Y'}^Y} \simeq X_{Y'}^Y \times_{Y'} X_{Y'}^Y$$
where on the left hand side $G_{Y^\prime}$ acts only on the factor $G_{Y^\prime}$ by translation and on the right hand side $G_{Y^\prime}$ acts on the first factor only. \\ 
$(2) \Rightarrow (1)$ Trivial.
\end{proof}
\section{Linearly reductive group scheme}
\subsection{Definition}
From the notion of tameness, 
we can define a class of group schemes 
with nice properties, which will be very important for the following.
\begin{definition}
We say that $G$ is \textbf{linearly reductive group scheme}, if $(-)^A : {}_R \mathcal{M}^A \rightarrow {}_R \mathcal{M}$ is exact. 
\end{definition}
\subsection{Cohomological properties}
The cohomology of linearly reductive group schemes has some very interesting cohomological vanishing properties that are useful for deformation involving such group schemes. We can even characterize linearly reductivity by the vanishing of the cohomology:
\begin{lemme} (see \cite[Proposition 1]{Kemper})
Suppose that $G$ is flat, finite over $S$, that $X$ finitely presented over $S$ and $S = Spec(k)$ where $k$ is a field.
The following assertion are equivalent: 
\begin{enumerate}
\item $G$ is linearly reductive over $S$. 
\item $H^1 (G, M ) = 0$ for any $M \in \mathcal{M}^A$. 
\end{enumerate}
\end{lemme}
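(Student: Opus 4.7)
The plan is to argue entirely in terms of derived functors: recall that by definition $H^i(G, -) := R^i (-)^A$, so the equivalence becomes the general statement that a left exact functor with values in an abelian category is exact if and only if its first right derived functor vanishes identically. The hypotheses (base a field, $G$ finite flat) enter only to guarantee that $\mathcal{M}^A$ has enough injectives so that derived functors actually make sense; the condition on $X$ plays no role in this formal argument.

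For the direction $(1) \Rightarrow (2)$, assume $(-)^A : {}_k\mathcal{M}^A \to {}_k\mathcal{M}$ is exact. Then all higher right derived functors $R^i(-)^A$ vanish on every object, so in particular $H^1(G, M) = R^1(-)^A(M) = 0$ for every $M \in \mathcal{M}^A$.

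For $(2) \Rightarrow (1)$, the functor $(-)^A$ is always left exact by definition, so the only thing to check is right exactness. Take any short exact sequence
$$0 \to M' \to M \to M'' \to 0$$
in $\mathcal{M}^A$. The long exact sequence of derived functors reads
$$0 \to (M')^A \to M^A \to (M'')^A \to H^1(G, M') \to H^1(G, M) \to \cdots ,$$
and the hypothesis $H^1(G, M') = 0$ forces the map $M^A \to (M'')^A$ to be surjective. Thus $(-)^A$ is right exact, hence exact, so $G$ is linearly reductive.

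The only non-formal point is ensuring that the derived functor framework applies, i.e.\ that $\mathcal{M}^A$ has enough injectives so that $H^1(G,-)$ really is $R^1(-)^A$; this is standard for a flat Hopf algebra over a field (comodules over $A$ embed into cofree comodules $V \otimes_k A$, which are injective), and may simply be cited from the reference \cite{Kemper}. No part of the argument is expected to present a genuine obstacle; the content is really a direct translation between exactness and vanishing of the first derived functor.
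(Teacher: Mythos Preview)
The paper does not actually supply a proof of this lemma: it only records the statement with the citation ``(see \cite[Proposition 1]{Kemper})'' and then moves on to the next lemma. There is therefore nothing in the paper to compare your argument against.

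Your proof is correct and is the standard derived-functor argument for this equivalence. The only substantive point, as you note yourself, is that $\mathcal{M}^A$ has enough injectives so that $H^1(G,-)$ is genuinely $R^1(-)^A$; your justification via cofree comodules $V\otimes_k A$ is the usual one and is valid when $A$ is flat over the base (automatic over a field). The hypothesis on $X$ is indeed irrelevant, and your remark to that effect is accurate.
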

The following lemma is part of the proof of \cite[Lemma 2.13]{AOV}:
\begin{lemme}\label{smoothlr}
If $G$ is linearly reductive over $S$, then 
\begin{enumerate}
\item $$ 
Ext^i ( L_{B_k G/k} , \mathcal{F})= 0 , \ for \ i \neq -1, 0.
$$
for any coherent sheaf $\mathcal{F}$ on $B_kG$.
\item If we suppose that $G$ is also smooth, 
$$Ext^i ( L_{B_k G/k} , \mathcal{F}) =0, \  for\  i\neq -1 $$ 
for any coherent sheaf $\mathcal{F}$ on $B_kG$.
\end{enumerate}
\end{lemme}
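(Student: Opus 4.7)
The plan is to exploit the universal atlas $p:Spec(k)\to B_kG$ to pin down $L_{B_kG/k}$ in terms of the cotangent complex of $G$ at the identity, and then translate $Ext$-vanishing into cohomological vanishing on $B_kG$, where linear reductivity of $G$ does the rest.

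I would start from the fundamental distinguished triangle
$$p^*L_{B_kG/k}\to L_{Spec(k)/k}\to L_p\xrightarrow{+1}$$
attached to the fppf cover $p$. Since $L_{Spec(k)/k}=0$, this yields $p^*L_{B_kG/k}\simeq L_p[-1]$. The fiber product $Spec(k)\times_{B_kG}Spec(k)$ is $G$, so flat base change identifies $L_p$ with $e^*L_{G/k}$ along the identity section $e:Spec(k)\to G$. For $G$ of finite presentation, $e^*L_{G/k}$ has cohomology in at most two adjacent degrees; when $G$ is moreover smooth, $L_{G/k}=\Omega^1_{G/k}$ sits in a single degree, so that $p^*L_{B_kG/k}$ reduces to a shift of the locally free sheaf $\mathfrak{g}^\vee$ of cotangent vectors at the identity.

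Since $p$ is fppf, fppf descent transfers this description to $L_{B_kG/k}$ itself, giving a complex of amplitude at most two (collapsing to a single term in the smooth case) whose cohomology sheaves correspond, under $Qcoh(B_kG)\simeq\mathcal{M}^A$, to finite-dimensional $G$-representations. For any coherent $\mathcal{F}$ I would then truncate $L_{B_kG/k}$ against its cohomology sheaves and apply $RHom(-,\mathcal{F})$; each $Ext^i(L_{B_kG/k},\mathcal{F})$ sits in a long exact sequence built out of groups of the form $Ext^j(\mathcal{E},\mathcal{F})$ on $B_kG$ with $\mathcal{E}$ locally free.

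Linear reductivity now does the work: exactness of $(-)^A$ forces $H^j(B_kG,\mathcal{G})=0$ for every $j\geq 1$ and every quasi-coherent $\mathcal{G}$, whence $Ext^j(\mathcal{E},\mathcal{F})=0$ for $j\geq 1$ when $\mathcal{E}$ is locally free, these Ext groups being computed by the cohomology of $\mathcal{E}^\vee\otimes\mathcal{F}$. Feeding this into the long exact sequences kills $Ext^i(L_{B_kG/k},\mathcal{F})$ outside $\{-1,0\}$ in general, and outside $\{-1\}$ under the additional smoothness hypothesis. The main obstacle I anticipate is not the cohomological step but the careful bookkeeping of shifts in going from $L_{G/k}$ through $L_p$ to $L_{B_kG/k}$; in particular, verifying that the $[-1]$-shift in the fundamental triangle produces exactly the amplitude $\{-1,0\}$ claimed in part (1), and that the smoothness of $G$ is precisely what collapses the two-term complex to the single term needed for part (2).
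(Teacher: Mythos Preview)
Your argument is correct and follows the same strategy as the paper: bound the cohomological amplitude of $L_{B_kG/k}$, exploit that coherent sheaves on $B_kG$ over a field are locally free, and finish via the exactness of global sections coming from linear reductivity. The only difference is cosmetic: the paper simply invokes \cite[Lemma~2.15]{AOV} for the amplitude bound and applies local freeness to $\mathcal{F}$ to control $R\underline{Hom}(L_{B_kG/k},\mathcal{F})$ directly, whereas you rederive the amplitude bound through the transitivity triangle for the atlas $p$ and apply local freeness to the cohomology sheaves $\mathcal{E}$ of $L_{B_kG/k}$ to reduce to $Ext^j(\mathcal{E},\mathcal{F})=H^j(B_kG,\mathcal{E}^\vee\otimes\mathcal{F})$; the underlying content is identical.
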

\begin{proof}
By \cite[Lemma 2.15]{AOV}, the cotangent complex $L_{B_k G/k}$ of the structural morphism $B_k G\rightarrow k$ (where $k$ is a field) belongs to $D_{coh}^{[-1,0]} ( \mathcal{O}_{B_kG})$. Since $k$ is a field, any coherent sheaf on $B_kG$ is locally free, and therefore for any coherent sheaf $\mathcal{F}$ on $B_kG$, we have: 
$$
RHom ( L_{B_k G/k} , \mathcal{F}) \in D_{coh}^{[-1,0]} ( \mathcal{O}_{B_kG}).
$$ 
\begin{enumerate}
\item Since the global section functor is exact on the category $Coh( \mathcal{O}_{B_kG})$ (since $G$ is linearly reductive), we obtain that  then 
$$ 
Ext^i ( L_{B_k G/k} , \mathcal{F})= 0 , \ for \ i \neq -1, 0.
$$
for any $\mathcal{F}$ is a coherent sheaf on $B_kG$.
\item When $G$ is smooth, then $L_{B_k G/k}\in D_{coh}^{[0]} ( \mathcal{O}_{B_kG})$ and 
$$Ext^i ( L_{B_k G/k} , \mathcal{F}) =0, \  for\  i\neq -1 $$ 
for any $\mathcal{F}$ is a coherent sheaf on $B_kG$.
\end{enumerate}
\end{proof}
\subsection{Liftings of linearly reductive group schemes} 
By \cite[Proposition 2.18]{AOV}, we know that a linearly reductive group scheme over some point can be lifted as linearly reductive group to an étale neighborhood. Thanks to this result, we were able to prove that we can lift this group fppf locally as a subgroup in the following sense. 
\begin{theoreme} \label{extension}
Let $\mathfrak{p}$ be a point of $S$, $G$ be a finite, flat group scheme over $S$ and $H_0$ be a finite, linearly reductive closed subgroup scheme of $G_{k( \mathfrak{p})}$ over $Spec(k( \mathfrak{p}))$. Then, there exists a flat, finitely presented morphism $U \rightarrow S$, 
a point $\mathfrak{q} \in U$ mapping to $\mathfrak{p}$ and a flat linearly reductive closed subgroup scheme $H$ of $G_U$ over $U$ whose pullback $H_{k( \mathfrak{q} )}$ is isomorphic to the pullback of ${H_0}_{k(\mathfrak{q})}$. 
\end{theoreme}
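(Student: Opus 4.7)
The plan is to separate the argument into a lifting of the group itself and a lifting of the embedding. First, I would invoke \cite[Proposition 2.18]{AOV}, already recalled just above the theorem, to produce a finite, flat, linearly reductive group scheme $\tilde{H}$ over some \'etale neighborhood $V \to S$ of $\mathfrak{p}$ together with an isomorphism $\tilde{H}_{k(\mathfrak{p})} \simeq H_0$. This reduces matters to realizing $\tilde{H}$ as a closed subgroup scheme of $G_V$, after possibly passing to an fppf refinement of $V$ around $\mathfrak{p}$, in a way that recovers the given embedding $H_0 \hookrightarrow G_{k(\mathfrak{p})}$ on the fiber.

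Second, I would parametrize embeddings by a moduli space. Consider the functor $F$ on $V$-schemes with $F(T) := Hom_{gp/T}(\tilde{H}_T, G_T)$. Since $\tilde{H}$ and $G_V$ are finite and flat over $V$, this functor is representable by a finitely presented, affine $V$-scheme $\mathcal{F}$, and the given embedding $H_0 \hookrightarrow G_{k(\mathfrak{p})}$ defines a $k(\mathfrak{p})$-point $\xi$ of $\mathcal{F}$. The problem becomes: produce an fppf neighborhood $U \to V$ of $\mathfrak{p}$ together with a $U$-point of $\mathcal{F}$ specializing to $\xi$.

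Third, deformation theory supplies this lift. Deformations of $H_0 \to G_{k(\mathfrak{p})}$ along square-zero extensions are controlled by $Ext$-groups of the cotangent complex of $B_{k(\mathfrak{p})} H_0$ with appropriate coefficients. By Lemma \ref{smoothlr}, these vanish in positive degree precisely because $H_0$ is linearly reductive, so the lifting obstructions vanish order by order. This formal smoothness, combined with the finite presentation of $\mathcal{F}$ and an Artin-style approximation / algebraization, yields an fppf section of $\mathcal{F} \to V$ through $\xi$ over an fppf neighborhood $U$ of $\mathfrak{p}$. One then sets $H := \tilde{H}_U$, endowed with the resulting homomorphism $H \to G_U$.

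Finally, I would check that $H \to G_U$ is a closed immersion, so that $H$ is genuinely a closed subgroup scheme. Its kernel $K$ is a flat closed subgroup scheme of $H$ whose fiber over $\mathfrak{q}$ is trivial (since the fiber map $H_0 \hookrightarrow G_{k(\mathfrak{p})}$ is a closed immersion); Nakayama applied to the finite flat $K$ gives $K = 0$ after shrinking $U$ around $\mathfrak{q}$. Linear reductivity of $H$ is inherited from that of $\tilde{H}$ by base change. The main obstacle is the third step: setting up the deformation theory of group homomorphisms $\tilde{H}_0 \to G_0$ cleanly and extracting actual flatness of $\mathcal{F}$ at $\xi$ from the vanishing in Lemma \ref{smoothlr}. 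The finiteness and flatness hypotheses on $G$ and $\tilde{H}$ ensure that the moduli problems involved are representable and that the obstruction calculus is well-behaved.
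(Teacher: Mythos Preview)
Your proposal is correct and mirrors the paper's proof: lift the group via \cite[Proposition~2.18]{AOV}, lift the embedding formally by killing the obstructions with Lemma~\ref{smoothlr}, and algebraize via Grothendieck existence plus Artin approximation. The paper packages the second step as constructing a representable morphism of classifying stacks $B_U H \to B_U G_U$ and then reads off the homomorphism $H_Q \to G_Q$ only after a further base change by the resulting $G_U$-torsor $Q \to U$ (this is where the genuinely fppf, not merely \'etale, cover enters), whereas you work directly with the Hom-scheme; the paper also checks the closed immersion from properness of $H$ and separatedness of $G$ rather than via your kernel-and-Nakayama argument. One small correction: the relevant obstruction space is $Ext^1$ on $B_{k(\mathfrak{p})}H_0$ of the \emph{pullback of} $L_{B_{k(\mathfrak{p})}G_{k(\mathfrak{p})}/k(\mathfrak{p})}$, not of $L_{B_{k(\mathfrak{p})}H_0/k(\mathfrak{p})}$ itself, but the vanishing still follows exactly as in Lemma~\ref{smoothlr} from the linear reductivity of $H_0$.
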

\begin{proof} 
Let $\mathfrak{p} \in S$. By \cite[Proposition 2.18]{AOV}, there exists an étale morphism 
$U \rightarrow S$, a point $\mathfrak{q} \in U$ mapping to $\mathfrak{p}$ and a linearly reductive group scheme $H$ over $U$ such that $H_{k( \mathfrak{q})} \simeq {H_0}_{k(\mathfrak{q})}$. Set $U_n := Spec ( R / \mathfrak{q}^{n+1})$. One has that $H_0$ is a subgroup scheme of $G_{k( \mathfrak{p})}$, and this defines a representable morphism of algebraic stacks $x_0 : B_{k( \mathfrak{q})} H_0\rightarrow B_{k(\mathfrak{q})} G_{k( \mathfrak{q})}$ over $k(\mathfrak{q})$. We want to prove the existence a representable morphism of algebraic stacks $x: B_U H \rightarrow B_U G_U$ filling in the following $2$-commutative diagram
$$
 \xymatrix {
B_{k( \mathfrak{q})} H_0 \ar[rr]  \ar[dddr] \ar[dr]^x && B_U H\ar@{.>}[dr]^{x'} \ar[dddr] \\
    &B_{k(\mathfrak{q})} G_{k( \mathfrak{q})} \ar[rr] \ar[dd]^g &&  B_U G_U\ar[dd] \\
    & &  \\
    & U_0 \ar[rr] && U \\
  }$$
We can prove, thanks to Grothendieck 's Existence theorem for algebraic stacks (see \cite[Theorem 11.1]{Olsson}) and Artin's approximation theorem (see \cite[Théorème 1.12]{Artin2}), that the existence of such $x$ only depends on the existence of a formal deformation, that is morphisms $x_n: B_{U_n} H_{U_n} \rightarrow B_{U_n} G_{U_n}$ filling in the following $2$-commutative diagram: 
$$ \xymatrix {
   B_{k( \mathfrak{q})} H_0   \ar@{.>}[rr]^{i_n} \ar[dddr] \ar[dr]^{x_1} &&  B_{U_{n-1}} H_{U_{n-1}} \ar[rr] \ar[dddr] \ar[dr]^{x_{n-1}} && B_{U_n} H_{U_n}\ar@{.>}[dr]^{x_n} \ar[dddr]\\
    &B_{k(\mathfrak{q})} G_{k( \mathfrak{q})}  \ar@{.>}[rr]^{j_n} \ar[dd] &&B_{U_{n-1}} G_{U_{n-1}} \ar[rr] \ar[dd]&& B_{U_n} G_{U_n} \ar[dd]  \\
    & &  \\
    & Spec(k( \mathfrak{q})) \ar@{.>}[rr] &&U_{n-1} \ar[rr] && U_n \\
  }$$
 By \cite[Theorem 1.5]{Olsson2}, the obstruction to extend the morphism $x_{n-1}$ to $x_n$ lies in 
$$
Ext^1 ( Lx^*L_{B_{k( \mathfrak{q}) }G_{k( \mathfrak{q}) } /k( \mathfrak{q})}, \mathfrak{m}^{n}/ \mathfrak{m}^{n+1} \otimes_{k(\mathfrak{q})} \mathcal{O}_{B_{k(\mathfrak{q})} G_{k( \mathfrak{q})}}),
$$ 
which is trivial for any $n \in \mathbb{N}$ by Lemma \ref{smoothlr}. It follows that there exists an arrow $x_n$ filling the previous diagram.
This leads to the existence of a representable morphism of stacks $F :B_U H_U \rightarrow B_U G_U$. 
Let $Q\rightarrow U$ be the $G_U$-torsor 
which is the image via $F$ of the trivial $H_U$-torsor $H_U\rightarrow U$. 
Furthermore, the functor $F$ induces a homomorphism from $\underline{Aut}_S ( H_Q \rightarrow Q )= H_Q$ to the automorphism group scheme of the image of the $H_Q$-torsor $H_Q \rightarrow Q$ in $B_Q G_Q$. Since this image is the pullback of $Q$ to $Q$ over $B_Q G_Q$, which is canonically a trivial torsor, its automorphism group is $G_Q$. This defines a group morphism $H_Q \rightarrow G_Q$, 
with $Q\rightarrow U$ an fppf morphism as $G_U$-torsor. Since $F$ is representable, this morphism is injective. 
Finally, since $H$ is proper and $G$ is separated, $H_V$ is closed in $G_V$.  
\end{proof}
\section{Tame quotient stack}
\subsection{Coarse moduli spaces}
Abramovich, Olsson and Vistoli introduced in \cite{AOV} the notion of tame stack. 
Before recalling their definition, we need some additional terminology. 
\begin{definition} 
\textbf{A coarse moduli space for the quotient stack $[X/G]$} is a couple $(M, \phi)$ where $M$ is a algebraic space and $\phi : [X/G]\rightarrow M$ a morphism of stacks such that any morphism from $[X/G]$ to an algebraic space factor through $\phi$ and such that for any algebraically closed field $\Omega$, $|[X/G]( \Omega )| \simeq M (\Omega )$ where $|[X/G](\Omega )|$ is the set of 
isomorphism classes of the geometric points of $[X/G]$ taking value on $\Omega$. 
\end{definition}
\begin{remarque}\label{quotient}
\begin{enumerate}
\item The notion of coarse moduli spaces can be generalized for general Artin stack. Coarse moduli space for quotient stack are in particular a categorical quotient in the category  of algebraic spaces for the action $(X,G)$. More precisely, 
\begin{enumerate}
\item  Let $M$ be an algebraic space. The datum of a morphism $\phi :[X/G] \rightarrow M$ is equivalent to the datum of a morphism of $S$-algebraic spaces $f: X \rightarrow M$ such that $f\circ pr_1 = f \circ \mu_X $. In particular, the canonical map $\pi : X \rightarrow Y$ induces a map $\rho : [X/G] \rightarrow Y$. 
\item If $f : X \rightarrow M$ is a geometric quotient which is categorical in the category of algebraic spaces, then $[X/G] \rightarrow M$ is a coarse moduli space. 
\end{enumerate}
For more details, see \cite[10.4]{these}.
\item If $G$ is finite, flat over $S$ then $\rho : [X/G]\rightarrow Y$ is a coarse moduli space for $[X/G]$ (see \cite[\S 3]{conrad}).
\end{enumerate}
\end{remarque}
\begin{theoreme}\label{coarseexistence}
Suppose that the group scheme $G$ and the scheme $X$ are finitely presented over $S$ and that all the inertia group schemes are finite. The quotient stack $[X/G]$ admits a coarse moduli space $\phi : [X/G] \rightarrow M$ such that $\phi$ is proper. Moreover, $\phi$ induces a functor $\phi_* : Qcoh( [X/G] ) \rightarrow Qcoh (M)$.
\end{theoreme}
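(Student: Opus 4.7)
The plan is to reduce the statement to the Keel--Mori theorem for Artin stacks, in the form given by Conrad \cite{conrad} and refined by Abramovich--Olsson--Vistoli \cite[Theorem 3.1]{AOV}. The only nontrivial verification is that the inertia stack of $[X/G]$ is finite; once this is established, the existence of a proper coarse moduli space is a direct citation, and the statement about $\phi_*$ follows from general pushforward properties of proper morphisms of Artin stacks.

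First, I would show that the inertia stack $I_{[X/G]} \to [X/G]$ is finite. The fppf atlas $p : X \to [X/G]$ pulls the inertia stack back to the relative inertia
$$I_G \;=\; (X \times_S G) \times_{X \times_S X,\,(\mu_X,\, pr_1)} X,$$
which is a closed subscheme of $X \times_S G$ since $X$ is $S$-separated (being affine). Finite presentation of $X$ and $G$ over $S$ yields finite presentation of $I_G \to X$. Its topological fibers are precisely the inertia group schemes $I_G(\mathfrak{p})$, which are finite by hypothesis, so $I_G \to X$ is quasi-finite; combined with the separatedness of $I_G \to X$ and the properness coming from the group scheme structure at each fiber, one concludes that $I_G \to X$ is finite. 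By fppf descent along $p$, $I_{[X/G]} \to [X/G]$ is then finite as well.

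Next, having verified that $[X/G]$ is an Artin stack locally of finite presentation over the noetherian base $S$ with finite inertia, I would invoke the Keel--Mori theorem as stated in \cite[Theorem 1.1]{conrad} (compare \cite[Theorem 3.1]{AOV}): this produces an algebraic space $M$ and a morphism $\phi : [X/G] \to M$ that is a coarse moduli space in the sense of the previous definition, with $M$ of finite type over $S$ and $\phi$ proper (in fact proper and quasi-finite). This yields the first assertion of the theorem.

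Finally, for the claim that $\phi$ induces $\phi_* : Qcoh([X/G]) \to Qcoh(M)$, I would work étale-locally on $M$, where the problem reduces to an affine situation: a quasi-coherent sheaf on $[X/G]$ corresponds under the equivalence $Qcoh([X/G]) \simeq {}_B\mathcal{M}^A$ to a $(B,A)$-module, and its pushforward to $M$ is the submodule of invariants, hence quasi-coherent on $M$; more abstractly, properness of $\phi$ implies that $\phi$ is quasi-compact and quasi-separated, and the general theory of quasi-coherent pushforward along such morphisms of Artin stacks ensures that quasi-coherence is preserved. The main obstacle is really the first step, the verification that finiteness of all the fibers $I_G(\mathfrak{p})$ suffices for finiteness of the relative inertia $I_G \to X$; once this is in hand, the rest of the theorem is a citation plus a local computation.
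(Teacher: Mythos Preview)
Your overall strategy coincides with the paper's: check that $[X/G]$ is an Artin stack of finite presentation over $S$ with finite inertia, invoke \cite[Theorem 1.1]{conrad} (Keel--Mori) to get the proper coarse moduli space, and then observe that $\phi$ is quasi-compact and quasi-separated so that $\phi_*$ preserves quasi-coherence. The paper is terser: it spends its first two sentences on the finite-presentation check (using that $X\to[X/G]$ is an fppf $G$-torsor), cites Conrad directly, and for the pushforward cites \cite[Lemma~6.5(i)]{Olsson} rather than arguing \'etale-locally as you do.

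There is, however, a genuine gap in your verification that $I_G\to X$ is finite. You argue: the fibers $I_G(\mathfrak p)$ are finite by hypothesis, hence $I_G\to X$ is quasi-finite, and then ``combined with the separatedness of $I_G\to X$ and the properness coming from the group scheme structure at each fiber, one concludes that $I_G\to X$ is finite.'' But properness is not a fiberwise condition: a separated, finitely presented, quasi-finite morphism with proper fibers need not be proper (an open immersion of affine schemes already shows this), and the fact that each fiber is a finite group scheme over a field does not help. To promote fiberwise finiteness to finiteness of the morphism you need something more---for instance, if $G$ is finite over $S$ (the case that dominates the rest of the paper) then $I_G$ is closed in the finite $X$-scheme $X\times_S G$ and the conclusion is immediate. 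The paper avoids this issue entirely by reading the hypothesis ``all the inertia group schemes are finite'' as exactly the finiteness input required by Conrad's theorem, rather than as a fiberwise statement to be upgraded; you should either adopt that reading or supply a correct argument bridging the two.
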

\begin{proof}
Since $G$ is finitely presented over $S$, so is $X\rightarrow [X/G]$ (X is a $G$-torsor over $[X/G]$). Since $X\rightarrow [X/G]$ is surjective, flat of finite presentation and $X \rightarrow S$ is also of finite presentation, $[X/G]\rightarrow S$ is of finite presentation. By \cite[Theorem 1.1]{conrad}, the hypotheses insure that the quotient stack admits a coarse moduli space that we denote $\phi : [X/G] \rightarrow M$ such that $\phi$ is proper. In particular, 
$\phi$ is quasi-compact and quasi-separated, thus by \cite[Lem. 6.5(i)]{Olsson}, the induced morphism on the quasi-coherent sheaves $\phi_* : Qcoh( [X/G] ) \rightarrow Qcoh (M)$ is well defined.
\end{proof}
\subsection{On the exactness of the functor of invariants}
\begin{lemme} \label{exactinv} 
The map $\rho : [X/G]\rightarrow Y$ (defined in Remark \ref{quotient}) induces a functor $\rho_* : Qcoh([X/G]) \rightarrow Qcoh(Y)$. 
Then, the functor $\rho_*$ is exact if and only if the functor of invariants 
$(-)^A : {}_B\mathcal{M}^A \rightarrow {}_C\mathcal{M}$ is exact.
\end{lemme}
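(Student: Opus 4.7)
My plan is to reduce both exactness conditions to the same computation, using the categorical equivalences already set up in the excerpt. Since $Y=Spec(C)$ is affine, the global section functor $\Gamma(Y,-):Qcoh(Y)\to {}_C\mathcal{M}$ is an exact equivalence of categories. Combined with the equivalence $Qcoh([X/G])\simeq {}_B\mathcal{M}^A$ recalled at the end of the Hypotheses section, it suffices to identify the composition $\Gamma(Y,-)\circ\rho_*$ with the invariants functor $(-)^A:{}_B\mathcal{M}^A\to {}_C\mathcal{M}$ under these equivalences.

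To do this, I use that the canonical atlas $p:X\to[X/G]$ is an fppf $G$-torsor and that $\pi=\rho\circ p$. For $\mathcal{F}\in Qcoh([X/G])$ corresponding to $(M,\rho_M)\in{}_B\mathcal{M}^A$, fppf descent along $p$ describes $\Gamma([X/G],\mathcal{F})=\Gamma(Y,\rho_*\mathcal{F})$ as the equalizer of the two pullback maps $M\rightrightarrows M\otimes_R A$ coming from the two projections $X\times_{[X/G]}X\rightrightarrows X$. Under the canonical isomorphism $X\times_{[X/G]}X\simeq X\times_S G$ given by the Galois map $(\mu_X,pr_1)$, the first projection $pr_1$ induces $m\mapsto m\otimes 1$ while $\mu_X$ induces $m\mapsto\rho_M(m)$, because the comodule structure on $M$ is precisely the descent datum encoding the $G$-equivariance of $p^*\mathcal{F}$. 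Hence
$$\Gamma([X/G],\mathcal{F})\;=\;\{\,m\in M:\rho_M(m)=m\otimes 1\,\}\;=\;M^A,$$
and the $C$-module structure inherited from $\rho_*\mathcal{F}$ coincides with the natural one on $M^A$ coming from $C\subseteq B$, since $\pi^\#:C\to B$ is the inclusion.

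With this identification in place, $\rho_*$ is exact as a functor valued in $Qcoh(Y)$ if and only if $\Gamma(Y,-)\circ\rho_*$ is exact (because $\Gamma(Y,-)$ is an exact equivalence for the affine $Y$), which by the identification happens if and only if $(-)^A$ is exact. The only step that requires real work is the identification of $\Gamma([X/G],\mathcal{F})$ with the equalizer above; this is a standard application of fppf descent for quasi-coherent sheaves on the Artin stack $[X/G]$, together with the explicit description of $X\times_{[X/G]}X$ as $X\times_S G$ via the Galois map, and I expect this to be the only genuinely non-formal ingredient. Once it is granted, both implications of the lemma are immediate.
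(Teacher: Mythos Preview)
Your argument is correct and follows essentially the same route as the paper's proof. Both reduce the question to the factorization $\pi=\rho\circ p$ together with the equivalences $Qcoh([X/G])\simeq Qcoh^G(X)\simeq {}_B\mathcal{M}^A$ and $Qcoh(Y)\simeq {}_C\mathcal{M}$; the paper packages this into two commutative diagrams (citing that $p_*$ is an equivalence and that $\pi_*$ corresponds to $(-)^A$ under global sections), while you unpack the same content by computing $\Gamma([X/G],\mathcal{F})$ explicitly as the descent equalizer $M^A$.
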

\begin{proof}
By \cite[Lemma IV.10]{these}, we know that the map $\rho$ is quasi-compact and quasi-separated, thus by \cite[Lem. 6.5(i)]{Olsson}, the induced morphism on the quasi-coherent sheaves $\rho_* : Qcoh( [X/G] ) \rightarrow Qcoh (M)$ is well defined.
$\rho$ is defined as making the diagram bellow commute: 
$$
\xymatrix{ [X/G] \ar[rr]^\rho & &Y  \\ &X \ar[lu]^p \ar[ru]_\pi &}
$$
Thus, it induces the following commutating diagram of functors:
$$
\xymatrix{ Qcoh ([X/G]) \ar[rr]^{\rho_*} & & Qcoh (Y)  \\ &Qcoh^G(X) \ar[lu]^{p_*} \ar[ru]_{\pi_*} &} 
$$ 
The functor $p_*$ is an equivalence of categories (see \cite[Proposition IV.24]{these}). 
Moreover, by \cite[\S 5]{Hartshorne}, we have the following commutative diagram:  
$$
\xymatrix{ Qcoh ^G ( X) \ar[r]^{\pi_*} \ar[d]_-{\Gamma (X, \_)  } & \ar[d]^-{\Gamma (Y, \_) } Qcoh (Y) \\ {}_B \mathcal{M}^A  \ar[r]_{(\_)^A} &{}_C\mathcal{M} }
$$ 
where the functors of global section $\Gamma (X, \_) $ and $\Gamma ( Y, \_)$  are equivalences of categories. 
This proves the lemma.
\end{proof}
\begin{theoreme}\label{corocoarse}
Suppose that $X$ is noetherian, finitely presented over $S$, that $G$ is finitely presented over $S$ and that all the inertia groups are finite. If the functor of invariants $(-)^A$ is exact, then the map $\rho : [X/G] \rightarrow Y$ is a coarse moduli space of $[X/G]$ and $\rho$ is proper. \end{theoreme}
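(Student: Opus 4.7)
The plan is to invoke Theorem \ref{coarseexistence} to obtain a proper coarse moduli space $\phi:[X/G]\to M$ and then identify $M$ with $Y$ using the exactness of $(-)^A$. First, under the given hypotheses (finite inertia, $G$ and $X$ finitely presented over $S$), Theorem \ref{coarseexistence} produces a proper coarse moduli space $\phi:[X/G]\to M$. Since $Y$ is an algebraic space, the universal property of coarse moduli yields a unique morphism $g:M\to Y$ with $\rho=g\circ\phi$. Once we show $g$ is an isomorphism, properness of $\rho=g\circ\phi$ is immediate from Theorem \ref{coarseexistence}, and $\rho$ inherits the coarse moduli property from $\phi$.

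For the key computation I would translate into comodule language via the equivalence $Qcoh([X/G])\simeq Qcoh^G(X)\simeq {}_B\mathcal{M}^A$ appearing in the proof of Lemma \ref{exactinv}: the structure sheaf $\mathcal{O}_{[X/G]}$ corresponds to $B$ and $\rho_*$ corresponds to $(-)^A$. Hence $\rho_*\mathcal{O}_{[X/G]}\simeq B^A=C=\mathcal{O}_Y$, so the natural morphism $\mathcal{O}_Y\to\rho_*\mathcal{O}_{[X/G]}$ is an isomorphism. Combined with the hypothesis that $(-)^A$ is exact (equivalently, $\rho_*$ is exact, by Lemma \ref{exactinv}), this says that $\rho$ satisfies the defining properties of a good moduli space morphism in the sense of Alper. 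The Keel--Mori coarse moduli $\phi$ itself satisfies $\mathcal{O}_M\simeq\phi_*\mathcal{O}_{[X/G]}$ (see \cite{conrad}), so composing with $g$ yields $g_*\mathcal{O}_M\simeq\mathcal{O}_Y$.

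Finally I would show that $g$ is an isomorphism, which is the main obstacle. Using the local \'etale description of $M$ provided by the Keel--Mori construction (\'etale-locally on $M$ one has $M\simeq Spec(\mathcal{O}(Z)^H)$ for a local slice $Z$ with the action of a finite flat group scheme $H$ lifting an inertia group), together with the exactness of $(-)^A$, one checks that the formation of invariants commutes with the \'etale base changes exhibiting these local slices; the local presentation of $M$ is thereby canonically identified with $Y$ after the same \'etale base change. Hence $g$ is an \'etale-local isomorphism, and therefore an isomorphism. The crucial input is precisely that exactness of $(-)^A$ is what ensures that taking invariants is compatible with the \'etale localizations on $Y$, matching the Keel--Mori description of $M$ with $Y=Spec(B^A)$; everything else is formal, and $\rho=g\circ\phi$ is then a proper coarse moduli space.
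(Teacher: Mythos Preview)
Your overall strategy---produce the Keel--Mori coarse moduli space $\phi:[X/G]\to M$ and then compare it with $Y$---is the same as the paper's. You also correctly identify the key structural fact: the exactness of $(-)^A$ together with $\rho_*\mathcal{O}_{[X/G]}\simeq\mathcal{O}_Y$ makes $\rho:[X/G]\to Y$ a \emph{good moduli space} in Alper's sense. The divergence comes at the last step. The paper does not attempt your \'etale-local verification that $g:M\to Y$ is an isomorphism; instead it invokes \cite[Theorem~6.6]{Alper}, which asserts directly that a good moduli space is universal for maps to algebraic spaces, i.e.\ that $\pi:X\to Y$ is a categorical quotient. Since $X\to M$ (via $[X/G]$) is also a categorical quotient, uniqueness gives $M\simeq Y$ in one line.

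Your final paragraph, by contrast, is where the argument becomes genuinely incomplete. The Keel--Mori \'etale slices are \'etale over $M$, not over $Y$, so to run your comparison you would first need to know that $g:M\to Y$ is at least \'etale (or that the slice neighbourhoods are simultaneously \'etale over $Y$), which is precisely part of what you are trying to prove. Moreover, the assertion that ``exactness of $(-)^A$ ensures that taking invariants is compatible with the \'etale localizations'' is the real content of Alper's universality theorem and is not something one can simply ``check''; it requires the cohomologically affine machinery of \cite{Alper}. In effect you have recognised that $\rho$ is a good moduli space but then tried to reprove its universality by hand rather than citing it. Replace your last paragraph by the one-line appeal to \cite[Theorem~6.6]{Alper} and uniqueness of categorical quotients, and your proof becomes both correct and identical to the paper's.
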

\begin{proof}, 
By \cite[Theorem 6.6]{Alper}, the quotient $\pi: X \rightarrow Y$ is categorical in the category of algebraic spaces. \cite[Theorem 1.1]{conrad} insures that the quotient stack admits a coarse moduli space that we denote $\rho : [X/G] \rightarrow M$ such that $\rho$ is proper. But, since $\rho \circ p :  X \rightarrow [X/G] \rightarrow M$ and $\pi : X \rightarrow Y$ are categorical quotients in the category of algebraic spaces, we obtain by unicity that $M \simeq Y$ and that $\rho : [X/G] \rightarrow Y$ is a coarse moduli space. 
\end{proof}
\subsection{Definition of tame quotient stack}
Finally, we can define a tame quotient stack. Since we need the existence of a coarse moduli space. Until the end of section 5, we suppose that $G$ is flat, finitely presented over $S$, that $X$ is finitely presented over $S$ and that all the inertia groups at the geometric points 
for the action $(X,G)$ are finite. Moreover, we denote $M$ the coarse moduli space and $\phi: [X/G] \rightarrow M$ the proper map (see Theorem \ref{coarseexistence}).
\begin{definition} \label{AOVdef}
We say that the quotient stack $[X/G]$ is \textbf{tame} if the functor $\phi_* : Qcoh [X/G] \rightarrow Qcoh (M)$ is exact. 
\end{definition}
\begin{remarque}
\begin{enumerate} 
\item The notion of tameness can be defined similarly for general stacks (see \cite[Definition 3.1]{AOV}).
\item When $G$ is finite flat over $S$, $[X/G]$ is tame if and only if the functor of invariants $(-)^A$ is exact. (It is a consequence of Lemma \ref{exactinv} since $Y$ is a coarse moduli space for $G$ finite flat over $S$, by \cite[\S 3]{conrad}).
\item When $G$ is finite flat over $S$, $G$ is linearly reductive if and only if the classifying stack $B_S G$ is tame.
\end{enumerate}
\end{remarque}
Tameness is local. In fact, 
\begin{lemme} \label{qslocal} \cite[Proposition 3.10]{Alper}
For any morphism of $S$-schemes $g : M'\rightarrow M$, we consider the following $2$-cartesian diagram: 
$$\xymatrix{ [X/G]_{M'} \ar[r]^{g'} \ar[d]_{\phi '} & [X/G] \ar[d]^{\phi } \\ M' \ar[r]_g & M}$$
Suppose that $\phi$ (resp. $\phi '$) is the coarse moduli space for $[X/G]$ (resp. $[X/G]_{M'}$).
\begin{enumerate}
\item If $g$ is faithfully flat and the quotient stack $[X/G]_{M'}$ is tame then the quotient stack $[X/G]$ is tame. 
\item If the quotient stack $[X/G]$ is tame then the quotient stack $[X/G]_{M'}$ is also tame. 
\end{enumerate}
\end{lemme}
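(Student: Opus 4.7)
My plan is to reduce the lemma to the characterization of tameness of a quotient stack via the linear reductivity of its geometric inertia groups (\cite[Theorem 3.2]{AOV}, already recalled in the introduction). Under this reformulation, tameness becomes a pointwise property on geometric stabilizers, and it suffices to compare the inertia groups of $[X/G]$ and $[X/G]_{M'}$ at compatible geometric points.

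The key observation is that, since $M$ and $M'$ are algebraic spaces (hence have trivial inertia at every geometric point), the fact that the diagram in the statement is $2$-cartesian forces, for every geometric point $\bar{x}' : Spec(\Omega) \to [X/G]_{M'}$ with image $\bar{x} := g' \circ \bar{x}'$ in $[X/G]$, a canonical identification $I_{[X/G]_{M'}}(\bar{x}') \simeq I_{[X/G]}(\bar{x})$ of group schemes over $Spec(\Omega)$. This is simply the general computation of the inertia (diagonal fiber product) in a $2$-fiber product of stacks, using the triviality of automorphisms on the algebraic-space side of the square.

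For part $(2)$, I would pick any geometric point of $[X/G]_{M'}$, push it down to a geometric point of $[X/G]$ via $g'$, and conclude by the previous step that the inertia group there is linearly reductive (using the tameness of $[X/G]$). The AOV characterization then gives tameness of $[X/G]_{M'}$.

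For part $(1)$, I use the faithful flatness (hence surjectivity) of $g$ to lift an arbitrary geometric point $\bar{x}$ of $[X/G]$ to a geometric point $\bar{x}'$ of $[X/G]_{M'}$, possibly after enlarging the residue field to some $\Omega'$. By the identification above, $I_{[X/G]_{M'}}(\bar{x}')$ is the base change to $\Omega'$ of $I_{[X/G]}(\bar{x})$; tameness of $[X/G]_{M'}$ makes this base change linearly reductive over $\Omega'$, and linear reductivity descends along the faithfully flat field extension $\Omega \to \Omega'$ because exactness of the invariants functor descends along faithfully flat base change. Thus $I_{[X/G]}(\bar{x})$ is linearly reductive and $[X/G]$ is tame. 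The main technical point I expect to need care with is this descent step in part $(1)$; everything else is a bookkeeping exercise on inertia groups in $2$-cartesian diagrams.
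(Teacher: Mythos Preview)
Your argument is mathematically sound, but it takes a genuinely different route from the paper and there is a structural point worth flagging.

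The paper argues directly from the \emph{definition} of tameness as exactness of $\phi_*$. For part~(1) it uses the flat base-change isomorphism $g^*\phi_* \simeq \phi'_* g'^*$: since $g'$ is flat and $\phi'_*$ is exact by hypothesis, the composite $g^*\phi_*$ is exact, and faithful flatness of $g$ then forces $\phi_*$ to be exact. For part~(2) the paper first treats the case where $g$ is an open quasi-compact immersion by an explicit pushforward argument, and then reduces the general case to $g$ affine, where $g'_*$ is exact, $g_*\phi'_* = \phi_* g'_*$ is exact, and $g_*$ reflects exactness.

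Your approach instead transports the question to the level of geometric inertia groups via the characterization of tameness in \cite[Theorem~3.2]{AOV}. This is conceptually clean and sidesteps all the functorial bookkeeping. The caution is that, \emph{within the paper's own logical flow}, this is circular: the paper's restatement of that result (Theorem~\ref{slices}) invokes precisely Lemma~\ref{qslocal} to establish the implication $(5)\Rightarrow(1)$. If you are content to import \cite[Theorem~3.2]{AOV} as an external black box, your proof stands; but it cannot replace the paper's internal argument without reversing the order of dependencies. The paper's direct functorial proof, by contrast, is self-contained and makes Lemma~\ref{qslocal} available as input to the inertia characterization rather than a consequence of it.
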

\begin{proof}
\begin{enumerate}
\item From the $2$-cartesian diagram we deduce that the functors $g^* \phi_*$ and $\phi '_*g'^*$ are isomorphic. Since $g$ is flat, $g'$ is flat as well and $g'^*$ is exact; also $\phi' _*$ is exact by assumption, so the composite $\phi '_*g'^*$ is exact, hence so is $g^* \phi_*$. But, since $g$ is faithfully flat, we have that $\phi_*$ is also exact as required. 
\item First suppose that $g$ is an open, quasi-compact immersion. Let $$0 \rightarrow \mathcal{F}'_1\rightarrow   \mathcal{F}'_2 \rightarrow \mathcal{F}'_3\rightarrow 0 $$ be an exact sequence of $\mathcal{O}_{[X/G]_{M'}}$-modules. Set $\mathcal{F}_3:= g'_*\mathcal{F}_2 / g'_* \mathcal{F}_1$. Then $$0\rightarrow g'_* \mathcal{F}'_1 \rightarrow g'_* \mathcal{F}'_2 \rightarrow \mathcal{F}_3 \rightarrow 0$$ is exact. Moreover, $g'^* \mathcal{F}_3 \simeq \mathcal{F}'_3$ since the adjunction morphism $g'^*g'_* \rightarrow id$ is an isomorphism. Since $\phi_*$ is exact by assumption, $\phi_*g'_* \mathcal{F}'_2 \rightarrow \phi_* \mathcal{F}_3$ is also surjective, but then $g_* \phi'_* \mathcal{F}'_2 \rightarrow \phi_* \mathcal{F}_3$ is surjective as well. Since $g$ is an open immersion, $\phi'_* \mathcal{F}'_2 \rightarrow g^* \phi_* \mathcal{F}_3$ is surjective. Finally, since $g^* \phi_*$ and $\phi '_* g'_*$ are isomorphic functors, $\phi'_* \mathcal{F}'_2 \rightarrow \phi'_* \mathcal{F}'_3$ is surjective. \\
We consider now any morphism of schemes $g : M' \rightarrow M$. Since the tameness property on a stack is Zariski local, we can assume $M'$ and $M$ affine. Then $g'$ is also affine, so the functor $g'_*$ is exact. By assumption $\phi_*$ is exact, therefore $\phi_* g'_* = g_* \phi '_*$ is exact. But the functor $g_*$ has the property that a sequence $ \mathcal{F}_1\rightarrow   \mathcal{F}_2 \rightarrow \mathcal{F}_3 $ is exact if and only if $ g_* \mathcal{F}_1\rightarrow   g_* \mathcal{F}_2 \rightarrow g_*\mathcal{F}_3 $ is exact. It follows that $\phi '_*$ is exact as required.
\end{enumerate}
\end{proof}
\subsection{Local definition of tameness}
\begin{theoreme}\label{slices}\cite[Theorem 3.2]{AOV}
The following assertions are equivalent: 
\begin{enumerate}
\item The quotient stack $[X/G]$ is tame.
\item The inertia groups $I_G( \xi) \rightarrow Spec(k)$ are linearly reductive groups, for any $\xi : Spec (k) \rightarrow X$, where $k$ is a field.
\item The inertia groups $I_G( \xi) \rightarrow Spec(k)$ are linearly reductive groups, for any geometric point $\xi : Spec (k) \rightarrow X$, where $k$ is an algebraically closed field.
\item The inertia groups $I_G( \mathfrak{p}) \rightarrow Spec(k(\mathfrak{p}))$ are linearly reductive groups, for any $\mathfrak{p} \in X$.
\item For any point $\mathfrak{p}\in X$, denote $\mathfrak{q}$ its image via the morphism $X \rightarrow M$ there exist an fppf (also can be chosen étale surjective) morphism $M' \rightarrow M$ containing $\mathfrak{q}$ in its image, a linearly reductive group scheme $H \rightarrow M'$, such that $H_{k(\mathfrak{q})}\simeq I_G(\mathfrak{p})$, acting on a finite and finitely presented scheme $P \rightarrow M'$ and an isomorphism of algebraic stacks over $M'$ $$[X/G] \times_M M' \simeq [P/H].$$
\end{enumerate}
\end{theoreme}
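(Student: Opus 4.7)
The strategy is to organize the equivalences as the chain $(5)\Rightarrow(1)\Rightarrow(4)\Leftrightarrow(3)\Leftrightarrow(2)\Rightarrow(5)$, concentrating the real work in the implication $(4)\Rightarrow(5)$. The block $(2)\Leftrightarrow(3)\Leftrightarrow(4)$ is pure base change: any geometric point factors through a topological point via an algebraic closure of its residue field, so $(2)\Rightarrow(3)$ is immediate; conversely, linear reductivity of the inertia is preserved under arbitrary base change (the invariants functor commutes with flat base change) and descends along faithfully flat field extensions, giving $(3)\Rightarrow(4)\Rightarrow(2)$.

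For $(5)\Rightarrow(1)$: I first argue that $[P/H]$ is tame. Indeed, $H$ is linearly reductive, so the functor of $H$-invariants is exact on $(P,H)$-modules; combined with Lemma \ref{exactinv} this gives the tameness of $[P/H]$ over its coarse space $M'$. Lemma \ref{qslocal}(1) then promotes tameness from $[X/G]\times_MM'$ to $[X/G]$ along the fppf cover $M'\to M$.

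For $(1)\Rightarrow(4)$: given $\mathfrak{p}\in X$ with image $\mathfrak{q}\in M$, I base change along $\mathrm{Spec}(k(\mathfrak{q}))\to M$ and apply Lemma \ref{qslocal}(2) to reduce to the case $M=\mathrm{Spec}(k(\mathfrak{q}))$, where $X$ is finite. Then I localize at the residual gerbe at $\mathfrak{p}$, which is identified with $B_{k(\mathfrak{p})}I_G(\mathfrak{p})$. Exactness of the pushforward to its coarse space translates, via Lemma \ref{exactinv}, into exactness of the $I_G(\mathfrak{p})$-invariants functor on comodules, i.e. linear reductivity of $I_G(\mathfrak{p})$.

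The hard part is $(4)\Rightarrow(5)$. Fix $\mathfrak{p}\in X$ with image $\mathfrak{q}\in M$; the linearly reductive inertia $I_G(\mathfrak{p})$ is a closed subgroup of $G_{k(\mathfrak{p})}$. I would first invoke Theorem \ref{extension} (applied after replacing $S$ by a suitable neighborhood of $\mathfrak{q}$ in $M$ and passing to the stabilizer subscheme, which is finite by hypothesis) to produce an fppf morphism $M'\to M$ through $\mathfrak{q}$ and a flat, linearly reductive closed subgroup scheme $H$ of $G_{M'}$ lifting $I_G(\mathfrak{p})$. Next, I would deform the residual gerbe morphism $B_{k(\mathfrak{p})}I_G(\mathfrak{p})\to[X/G]$ step by step along the infinitesimal thickenings of $\mathrm{Spec}(k(\mathfrak{q}))$ in $M'$, exactly as in the proof of Theorem \ref{extension}: the obstructions lie in the $\mathrm{Ext}^1$ groups controlled by Lemma \ref{smoothlr}, and vanish because $H$ is linearly reductive. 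Grothendieck's existence theorem and Artin approximation then algebraize the formal deformation to a representable morphism $B_{M'}H\to[X/G]\times_MM'$. Setting $P:=X\times_{[X/G]\times_MM'}B_{M'}H$ yields an $H$-scheme over $M'$ that is finite and finitely presented (because the inertia is finite and $X\to[X/G]$ is a $G$-torsor), and the construction produces a canonical isomorphism $[X/G]\times_MM'\simeq[P/H]$. The main obstacle is precisely this deformation-algebraization-descent step, where one must track representability and the torsor structure carefully to recover an honest $H$-action on a scheme $P$ rather than just an abstract isomorphism of stacks.
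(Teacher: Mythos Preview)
Your overall scheme and the easy implications are fine and close to the paper's own cycle $(1)\Rightarrow(2)\Rightarrow(3)\Rightarrow(4)\Rightarrow(5)\Rightarrow(1)$: the paper proves $(1)\Rightarrow(2)$ by showing that $B_kI_G(\xi)\to[X/G]\times_S Spec(k)$ is affine (your reduction via the residual gerbe is a legitimate variant for $(1)\Rightarrow(4)$), handles $(2)\Rightarrow(3)\Rightarrow(4)$ by base change, and invokes Lemma~\ref{qslocal} for $(5)\Rightarrow(1)$. For the hard step $(4)\Rightarrow(5)$ the paper simply defers to \cite[Theorem~3.2]{AOV}, so you are attempting more by sketching the deformation argument.

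That sketch, however, runs the deformation in the wrong direction, and the error is not cosmetic. You deform the closed immersion $B_{k(\mathfrak{p})}I_G(\mathfrak{p})\hookrightarrow[X/G]$ along thickenings of its source to obtain a representable morphism $B_{M'}H\to[X/G]\times_MM'$. But the obstruction to extending a morphism along a square-zero thickening of the \emph{source} lives in an $Ext^1$ built from the cotangent complex of the \emph{target}; here the target is $[X/G]$, and nothing in the hypotheses bounds $L_{[X/G]/M}$, so Lemma~\ref{smoothlr} gives no control. The argument in \cite{AOV} instead thickens the residual gerbe \emph{inside} $\mathcal{Y}=[X/G]\times_MM'$ and deforms the canonical map $B_{k(\mathfrak{p})}I_G(\mathfrak{p})\to B_{M'}H$ to morphisms $\mathcal{Y}_n\to B_{M'}H$; now the obstruction involves $L_{B_{M'}H/M'}$, which \emph{is} controlled by Lemma~\ref{smoothlr} since $H$ is linearly reductive. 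After algebraization one obtains a representable $x:\mathcal{Y}\to B_{M'}H$, and $P$ is then the pullback of the universal torsor $M'\to B_{M'}H$ along $x$, so that $[P/H]\simeq\mathcal{Y}$ holds tautologically. Your fiber product $X\times_{[X/G]\times_MM'}B_{M'}H$, by contrast, is a $G_{M'}$-torsor over the stack $B_{M'}H$, not an $H$-scheme over $M'$, and there is no direct route from it to the required isomorphism.
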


\begin{proof}
$(1) \Rightarrow (2)$ Let $\xi : Spec(k) \rightarrow X$ be a $k$-point, where $k$ is a field, and $I_G(\xi)$ is the inertia group in $\xi$. The quotient stack $[G_k / I_G(\xi)]$ is a scheme, denote it $G_k / I_G ( \xi) $. Since the square $$ \xymatrix{ G_k /I_G (\xi ) \ar[d] \ar[rr] && X\times_S Spec(k)  \ar[d] \\ B_k I_G (\xi ) \ar[rr]& & [X/G]\times_S Spec(k)}$$ is $2$-cartesian, $B_k I_G (\xi )\rightarrow [X/G]\times_S Spec(k)$ is affine since $G_k/ I_G (\xi )\rightarrow X \times_S Spec(k)$ is affine. \\  
Now, let us consider the following commutative diagram: 
$$
\xymatrix{B_k I_G (\xi ) \ar[d]_{(-)^{I_G (\xi )}} \ar[rr]^g && [X/G] \ar[d]^{\phi} \\ Spec ( k( \mathfrak{q}) ) \ar[rr]_f & & M}  \ \ (\star)
$$
Since we have seen that $g$ is affine, $g_* :Qcoh ( B_k I_G (\xi ) ) \rightarrow Qcoh ( [X/G])$ is an exact functor and $\phi_* : Qcoh ([X/G] ) \rightarrow Qcoh (Y )$ is exact by definition of tameness. Since $f_* \phi'_* = \phi_* g_*$, if $$ 0 \rightarrow V_1 \rightarrow V_2 \rightarrow V_3 \rightarrow 0$$
is an exact sequence of $G$-representations, considered as exact sequence of quasi coherent sheaves over $B_k I_G ( \xi)$, we have the following exact sequence: 
$$
0\rightarrow f_* (V_1)^{I_G (\xi )} \rightarrow f_* (V_2)^{I_G (\xi )} \rightarrow f_* (V_3)^{I_G (\xi )} \rightarrow 0.
$$ Moreover, $(-)^{I_G (\xi )}$ is left exact and this implies that 
$$
0 \rightarrow (V_1)^{I_G (\xi )}\rightarrow (V_2)^{I_G (\xi )}\rightarrow (V_3)^{I_G (\xi )}\rightarrow 0
$$
is exact.
So, $I_G (\xi )$ is linearly reductive. \\
$(2)  \Rightarrow (3)$ Immediate. \\ 
$(3)  \Rightarrow (4)$  By \cite[Theorem 2.16]{AOV}, it is enough to prove that $I_G (\mathfrak{p})_{\Omega}$ is trivial, for any $\Omega$ algebraically closed field. Denoting by ${\mathfrak{p}_{\Omega}}$ the composite $Spec( \Omega) \rightarrow Spec(k(\mathfrak{p})) \rightarrow X$. But,  $I_G (\mathfrak{p})_{\Omega}$ is a subgroup of $I_G( \mathfrak{p}_{\Omega}) $ which is trivial by assumption. Thus, $I_G ( \mathfrak{p})_{\Omega}$ is trivial. 
\\
$(4)  \Rightarrow (5)$ See \cite[Theorem 3.2]{AOV}  \\
$(5) \Rightarrow (1)$ See Lemma \ref{qslocal}.
\end{proof}
We obtain the following corollary. 
\begin{corollaire}  \label{localtamestack2}\cite[Corollary 3.5]{AOV}
The stack $[X/G] \rightarrow S$ is tame if and only if for any morphism $Spec(k) \rightarrow S$, where $k$ is an algebraically closed field, the geometric fiber $[X/G] \times_S Spec(k)\rightarrow Spec(k)$ is tame. \end{corollaire}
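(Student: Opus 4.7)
The strategy is to apply the equivalence between tameness and pointwise linear reductivity of inertia groups established in Theorem \ref{slices}, together with the base change behavior from Lemma \ref{qslocal}. The whole argument is essentially formal once Theorem \ref{slices} is in hand.

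For the direct implication, if $[X/G]$ is tame, then for any morphism $\mathrm{Spec}(k) \rightarrow S$ with $k$ algebraically closed, Lemma \ref{qslocal}(2) applied to the $2$-cartesian square with $M' = \mathrm{Spec}(k) \times_S M \rightarrow M$ immediately gives that $[X/G] \times_S \mathrm{Spec}(k)$ is tame over $\mathrm{Spec}(k)$ (the coarse moduli space formation is compatible with flat base change on $M$, and one reduces to this after the standard limit/base change argument so that the hypotheses of Lemma \ref{qslocal} apply).

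For the converse, assume every geometric fiber is tame. I would use characterization (3) of Theorem \ref{slices}: it suffices to prove that $I_G(\xi) \rightarrow \mathrm{Spec}(k)$ is linearly reductive for every geometric point $\xi : \mathrm{Spec}(k) \rightarrow X$ with $k$ algebraically closed. Given such a $\xi$, let $\eta : \mathrm{Spec}(k) \rightarrow S$ be the composition $\mathrm{Spec}(k) \xrightarrow{\xi} X \rightarrow S$, and let $\xi_k : \mathrm{Spec}(k) \rightarrow X_k := X \times_S \mathrm{Spec}(k)$ be the induced geometric point lifting $\xi$. Formation of the inertia group scheme is compatible with base change, so
$$I_{G_k}(\xi_k) \simeq I_G(\xi)$$
as $k$-group schemes. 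By hypothesis, $[X_k/G_k] = [X/G]\times_S\mathrm{Spec}(k)$ is tame, so Theorem \ref{slices}(3) applied to this geometric fiber tells us $I_{G_k}(\xi_k)$ is linearly reductive. Hence $I_G(\xi)$ is linearly reductive, and a second application of Theorem \ref{slices} in the reverse direction (condition (3) implies (1)) concludes that $[X/G]$ is tame.

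There is no real obstacle here: the only subtle point is checking that the inertia at a geometric point of $X$ coincides with the inertia of the corresponding geometric point of the appropriate geometric fiber $X_k$, which is immediate from the definition of $I_G$ as a fiber product and the compatibility of fiber products with base change. Everything else is a direct transcription through Theorem \ref{slices} and Lemma \ref{qslocal}.
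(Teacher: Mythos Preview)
Your proposal is correct and matches the paper's intent: the corollary is stated immediately after Theorem \ref{slices} with no separate proof, so the intended argument is precisely the reduction to criterion (3) (linear reductivity of inertia at geometric points) together with the observation that inertia at a geometric point is unchanged under base change to the corresponding geometric fiber. One minor remark: for the direct implication you could bypass Lemma \ref{qslocal} entirely and argue symmetrically via Theorem \ref{slices} (a geometric point of $X_k$ is already a geometric point of $X$ with the same inertia), which avoids the aside about coarse moduli spaces and base change.
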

\begin{definition}
We say that $[X/G]$ is \textbf{tame at $\mathfrak{p}$} if the inertia group $I_G(\mathfrak{p})$ is linearly reductive.
\end{definition}
\begin{remarque}
By the previous theorem, $[X/G]$ is tame if and only if $[X/G]$ is tame at $\mathfrak{p}$, for any $\mathfrak{p}\in X$.
\end{remarque}
\subsection{Existence of a torsor}
We state now an interesting consequence of the previous theorem which permits to define a torsor for a tame quotient stack for actions of finite commutative group schemes.
\begin{proposition}\label{tamefreeext}
Suppose that $G$ is finite, commutative flat over $S$. 
If the quotient stack $[X/G]$ is tame, then for any point $\mathfrak{p} \in X$, denoting by $\mathfrak{q}\in Y $ its image by the morphism $X \rightarrow Y$, there exist a finitely presented flat morphism $Y' \rightarrow Y$ containing $\mathfrak{q}$ in its image and a subgroup $H$  of $G_{Y'}$ over $Y'$ lifting the inertia group at $\mathfrak{p}$ such that  $X_{Y'}^Y /H$ is a $G_{Y'}/H$-torsor over $Y'$. 
\end{proposition}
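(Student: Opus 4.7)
The plan is to use tameness to ensure the inertia $I_G(\mathfrak{p})$ is linearly reductive, lift it fppf-locally to an honest closed subgroup of $G$, form the quotient of $X$ by this subgroup, and finally check that the residual action has trivial inertia at $\mathfrak{q}$ so that it becomes a torsor by the local freeness criterion.

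First I would invoke Theorem \ref{slices}: since $[X/G]$ is tame, the inertia group $I_G(\mathfrak{p})$ is a finite linearly reductive closed subgroup scheme of $G_{k(\mathfrak{p})}$. Viewing $\mathfrak{p}$ as a point of $S$ via the composition $\mathfrak{p} \to X \to S$, I would apply Theorem \ref{extension} with $H_0 = I_G(\mathfrak{p})$. This produces a flat, finitely presented morphism $U \to S$ and a flat linearly reductive closed subgroup scheme $\widetilde{H} \subset G_U$ whose fiber at a chosen point above $\mathfrak{p}$ recovers (a base change of) $I_G(\mathfrak{p})$. Setting $Y':= Y \times_S U$, the projection $Y' \to Y$ is flat and finitely presented, contains $\mathfrak{q}$ in its image, and $H := \widetilde{H} \times_U Y'$ is a flat linearly reductive closed subgroup of $G_{Y'}$ lifting the inertia at $\mathfrak{p}$.

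Next, because $G$ is commutative, $H$ is automatically normal in $G_{Y'}$, so the fppf quotient $G_{Y'}/H$ exists as a finite flat group scheme over $Y'$. Since $H$ is finite and flat and $X_{Y'}^Y$ is affine, the categorical quotient $X_{Y'}^Y/H$ exists as an affine $Y'$-scheme (obtained by taking $H$-invariants in $\mathcal{O}(X_{Y'}^Y)$) and carries a residual action of $G_{Y'}/H$. Let $\bar{\mathfrak{p}}$ denote the image in $X_{Y'}^Y/H$ of a point above $\mathfrak{p}$. The inertia of the action $(X_{Y'}^Y/H,\, G_{Y'}/H)$ at $\bar{\mathfrak{p}}$ is the quotient of the stabilizer of $\mathfrak{p}$ in $G_{Y'}$ by $H_{k(\mathfrak{p})}$, which by construction equals $I_G(\mathfrak{p})/I_G(\mathfrak{p}) = e$.

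Thus the residual action has trivial inertia at a point above $\mathfrak{q}$. Applying Theorem \ref{locfree0} to the action $(X_{Y'}^Y/H,\, G_{Y'}/H)$, a further finite flat base change $Y'' \to Y'$ containing the image of $\bar{\mathfrak{p}}$ makes the action free on $Y''$; absorbing $Y''$ into $Y'$ and $H_{Y''}$ into $H$ by relabeling, Theorem \ref{locfree} then promotes freeness into the statement that $X_{Y'}^Y/H$ is a $G_{Y'}/H$-torsor over $Y'$.

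I expect the main obstacle to be the second step: producing $H$ as an honest closed subgroup of $G_{Y'}$ rather than just an abstract algebraic group over $Y'$ that is isomorphic to $I_G(\mathfrak{p})$ at the special point. This is precisely what Theorem \ref{extension} provides, but it is where commutativity of $G$ silently enters again when we later want $H$ to be normal and hence the quotients $G_{Y'}/H$ and $X_{Y'}^Y/H$ to carry the expected structures; the only other technical verification is the compatibility of the stabilizer computation with the passage to these quotients.
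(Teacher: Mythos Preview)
Your proof is correct and follows essentially the same route as the paper: use Theorem \ref{slices} to get linear reductivity of $I_G(\mathfrak{p})$, lift it via Theorem \ref{extension} to a closed subgroup $H\subset G_{Y'}$, observe that the residual $G_{Y'}/H$-action on $X_{Y'}^Y/H$ has trivial inertia at $\bar{\mathfrak{p}}$, and then apply the local freeness criterion (Theorems \ref{locfree0}/\ref{locfree}) after a further fppf refinement. The only cosmetic difference is that the paper applies Theorem \ref{extension} directly over the base $Y$ (viewing the action as one of $G_Y$ on $X$ over $Y$) rather than over $S$ followed by the base change $Y'=Y\times_S U$, and it leaves implicit the points about normality of $H$ and existence of the quotients that you spell out.
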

\begin{proof}
Notice first that under the assumption of the theorem, $G_{Y'}/H$ is a finite affine group scheme. By the previous theorem, we know that the inertia at $\mathfrak{p}$ is linearly reductive. So, by Lemma \ref{extension}, there are a finitely presented flat morphism $Y' \rightarrow Y$ containing $\mathfrak{q}$ in its image and a linearly reductive group $H\rightarrow Y'$ lifting this inertia group as a subgroup of $G_{Y'}$. 
Moreover, the inertia group at $\overline{\mathfrak{p}}$, image of $\mathfrak{p}$ by the quotient morphism $X_{Y'}^Y \rightarrow X_{Y'}^Y /H$ for the action $(X_{Y'}^Y/H, G_{Y'}/H)$, is equal to $$I_{G_{Y'}/H} ( \overline{\mathfrak{p}})= I_G (\mathfrak{p}) / (I_G( \mathfrak{p}) \cap H_{k(\mathfrak{p})})= \{ e \}$$ By Theorem \ref{locfree}, up to passing to a finitely presented flat neighborhood of $\mathfrak{q}$, the action $(X_{Y'}^Y /H , G_{Y'}/H)$ is free, thus $X_{Y'}^Y /H$ is a $G_{Y'}/H$-torsor over $Y'$ since $G_{Y'}/H$ is finite (see Theorem \ref{locfree}).
\end{proof}
\begin{remarque}
It would be useful to establish the previous proposition in general if $G$ is not commutative. But, in this case $G/H$ is not necessarily a group scheme and we cannot even define the notion of action or of torsor. But, taking the normal closure of $H$ in $G_{Y'}$ instead of $H$, we can establish the same result.
\end{remarque}
\subsection{Slice theorem for tame quotient stacks}
We manage to prove a slice theorem for actions by a finite commutative group scheme, using the following lemma.
\begin{lemme} \cite[Proposition 6.5]{Boas} Let $H$ be a subgroup of $G$ such that the quotient for the natural translation action exists and is universal. Let $\psi : X \rightarrow G/H$ be a morphism of schemes preserving the $G$-actions. Let $$Z = X \times_{G/H} e$$
be defined as the fibered product of the two maps $\psi$ and the inclusion $e=H/H \rightarrow G/H$. Assume that the balanced product $Z\times^H G$ exists and is a universal quotient. Then we have an isomorphism of $G$-actions 
$$(X,G) \simeq (Z\times^H G, G).$$
\end{lemme}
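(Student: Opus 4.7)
The plan is to construct two mutually inverse $G$-equivariant morphisms $\alpha : X \to Z\times^H G$ and $\beta : Z\times^H G \to X$. The map $\beta$ will come from restricting the $G$-action on $X$ to $Z\times G\subset X\times G$ and descending to the balanced product; the map $\alpha$ will be built by fppf descent along the $H$-torsor $W := X\times_{G/H} G \to X$. Throughout, the universality assumptions on $G/H$ and on $Z\times^H G$ are exactly what is needed to ensure these descents make sense.

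First, I would observe that $Z$ is stable under the restriction of the $G$-action to $H$: for $z \in Z$ and $h\in H$, $G$-equivariance of $\psi$ gives $\psi(z\cdot h)=\psi(z)\cdot h=e\cdot h=e$, so $z\cdot h\in Z$. Hence the action map $Z\times G\to X$, $(z,g)\mapsto z\cdot g$, is well-defined, and it is invariant for the $H$-action $(z,g)\cdot h=(z\cdot h,h^{-1}g)$ on $Z\times G$ defining the balanced product, since $(z\cdot h)\cdot(h^{-1}g)=z\cdot g$ by associativity. By the hypothesis that $Z\times^H G$ is a universal quotient, this induces a morphism $\beta:Z\times^H G\to X$, and it is $G$-equivariant because the $G$-action on $Z\times^H G$ is inherited from right translation on the $G$-factor.

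Next, I would construct $\alpha$. Form the fibered product $W:=X\times_{G/H} G$ along $\psi$ and the quotient $q:G\to G/H$. Since $q$ is an fppf $H$-torsor, so is the projection $W\to X$ after base change, with $H$ acting on $W$ through the second factor. Define a morphism $W\to Z\times G$ by $(x,g)\mapsto(x\cdot g^{-1},g)$; equivariance of $\psi$ gives $\psi(x\cdot g^{-1})=\psi(x)\cdot g^{-1}=q(g)\cdot g^{-1}=e$, so the first coordinate lies in $Z$. Composing with the quotient $Z\times G\to Z\times^H G$, a direct manipulation shows that $(x,gh)\mapsto[x\cdot(gh)^{-1},gh]=[(x\cdot g^{-1})\cdot h^{-1},hg\cdot(\,\cdot\,)]$ reduces, via the relation $[z\cdot h^{-1},hg']=[z,g']$ in the balanced product, to $[x\cdot g^{-1},g]$; i.e.\ the resulting morphism $W\to Z\times^H G$ is $H$-invariant. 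By faithfully flat descent along the $H$-torsor $W\to X$, it descends uniquely to $\alpha:X\to Z\times^H G$.

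Finally, I would check that $\alpha$ and $\beta$ are mutually inverse by pulling everything back to the covers $W\to X$ and $Z\times G\to Z\times^H G$, where the computations collapse to the associativity identities $(x\cdot g^{-1})\cdot g=x$ and $[(z\cdot g)\cdot g^{-1},g]=[z,g]$; the $G$-equivariance of $\alpha$ then follows from that of $\beta$. The main delicate point is keeping track of the $H$-action conventions carefully enough so that the invariance calculation on $W$ matches the equivalence relation defining $Z\times^H G$; once conventions are fixed this is routine, and the substantive input is entirely contained in the hypothesized universality of both quotients.
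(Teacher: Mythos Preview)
The paper does not supply a proof of this lemma; it is quoted verbatim from \cite[Proposition~6.5]{Boas} and used as a black box in the proof of Theorem~\ref{commslice}. So there is no ``paper's own proof'' to compare against.

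Your argument is the standard one and is correct in substance: build $\beta$ from the action map using the universal property of $Z\times^H G$, build $\alpha$ by descent along the $H$-torsor $G\to G/H$ pulled back to $X$, and verify the two are inverse on the covers. The invariance computations you sketch are right once conventions are fixed (here $G/H$ should be read as $H\backslash G$ so that the residual $G$-action is on the right, matching the paper's conventions).

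One point to tighten: you assert that $q:G\to G/H$ is an fppf $H$-torsor, but the lemma as stated only assumes $G/H$ is a \emph{universal} quotient, which is a priori weaker. In the paper's standing hypotheses ($G$ affine, flat, finitely presented, and $H$ a flat closed subgroup, as in the application to Theorem~\ref{commslice}) this is automatic, but if you want your proof to match the generality of the lemma's statement you should either add the torsor hypothesis explicitly or argue more carefully using only universality (e.g.\ check that $\beta$ is an isomorphism after the base change $G\to G/H$, using that universality is stable under base change and that over $G$ both sides trivialise as $Z\times G$). This is a packaging issue rather than a mathematical gap.
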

Finally, we get the following slice theorem which extends \cite[Theorem 6.4]{Boas} as we will see above:
\begin{theoreme}\label{commslice}
Suppose that $G$ is commutative and finite over $S$. The quotient stack $[X/G]$ is tame if and only if  the action $(X,G)$ admits finitely presented flat slices such that the slice group at $\mathfrak{p} \in X$ are linearly reductive. 
\end{theoreme}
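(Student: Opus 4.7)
The plan is to prove both implications using two tools already at hand: Proposition \ref{tamefreeext}, which lifts the inertia at $\mathfrak{p}$ to a linearly reductive subgroup $H\subset G_{Y'}$ and turns the relative quotient into a torsor, and the balanced-product lemma \cite[Proposition 6.5]{Boas} quoted just above, which rebuilds an action from an $H$-invariant slice $Z$ once a $G$-equivariant map to $G/H$ is available.

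For the forward direction, fix $\mathfrak{p}\in X$ with image $\mathfrak{q}\in Y$. By Proposition \ref{tamefreeext}, one obtains a finitely presented flat morphism $Y'\to Y$ containing $\mathfrak{q}$ in its image and a closed subgroup $H\subset G_{Y'}$ lifting $I_G(\mathfrak{p})$ such that $X_{Y'}^Y/H\to Y'$ is a $(G_{Y'}/H)$-torsor. To produce the $G$-equivariant morphism required by the quoted lemma, I would base-change once more to $Y'':=X_{Y'}^Y/H$ (which is fppf over $Y'$, hence over $Y$): since any torsor is trivialized by pullback to itself, the diagonal of $Y''\times_{Y'}Y''$ provides a section $\sigma\colon Y''\to X_{Y''}/H$ together with a $(G_{Y''}/H)$-equivariant trivialization $X_{Y''}/H\simeq Y''\times_{Y''}(G_{Y''}/H)$. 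Composing its projection onto $G_{Y''}/H$ with the $H$-quotient $X_{Y''}\to X_{Y''}/H$ yields a $G_{Y''}$-equivariant morphism $\psi\colon X_{Y''}\to G_{Y''}/H$; setting $Z:=X_{Y''}\times_{G_{Y''}/H}e$, the lemma delivers the isomorphism $(X_{Y''},G_{Y''})\simeq(Z\times^H G_{Y''},G_{Y''})$. One then checks directly that $Z/H=Y''$ and that $H_{k(\mathfrak{p}'')}\simeq I_{G_{Y''}}(\mathfrak{p}'')$ at a suitable lift $\mathfrak{p}''$ of $\mathfrak{p}$, furnishing the required finitely presented flat slice with linearly reductive slice group $H$.

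For the converse, assuming $(X,G)$ admits such slices, by Theorem \ref{slices} tameness of $[X/G]$ follows once $I_G(\mathfrak{p})$ is known to be linearly reductive for every $\mathfrak{p}\in X$. The slice at $\mathfrak{q}=\pi(\mathfrak{p})$ gives $(X_{Y'}^Y,G_{Y'})\simeq(Z\times^{G_{\mathfrak{q}}}G_{Y'},G_{Y'})$, and a direct computation in the balanced product shows that the inertia at any point is of the form $g^{-1}I_{G_{\mathfrak{q}}}(z)g$; commutativity of $G$ collapses this into a closed subgroup of $G_{\mathfrak{q}}$, which is linearly reductive by the subgroup property of linearly reductive group schemes (cf.\ \cite[Proposition 2.7]{AOV}). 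Picking $\mathfrak{p}''$ above $\mathfrak{p}$, one has $I_{G_{Y'}}(\mathfrak{p}'')=I_G(\mathfrak{p})\otimes_{k(\mathfrak{p})}k(\mathfrak{p}'')$, and fpqc descent of linear reductivity (see \cite[Theorem 2.16]{AOV}) concludes.

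The main obstacle is the forward direction: the lemma from \cite{Boas} needs a global $G$-equivariant morphism to $G/H$, which is the essential obstruction in general, and the crux of the argument is to manufacture one after an additional fppf base change by exploiting the torsor structure produced by Proposition \ref{tamefreeext}. The commutativity of $G$ is used twice --- once to guarantee that $G/H$ is itself a group scheme (so that the balanced product $Z\times^H G$ and the lemma make sense), and once in the converse to simplify the conjugate $g^{-1}I_{G_{\mathfrak{q}}}(z)g$ to a closed subgroup of $G_{\mathfrak{q}}$.
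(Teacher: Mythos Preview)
Your proposal is correct and follows essentially the same route as the paper. In the forward direction, the paper likewise invokes Proposition \ref{tamefreeext} to obtain the $(G/H)$-torsor $X/H\to Y$, performs the further fppf base change along $X/H\to Y$ to trivialize it, and then reads off the $G$-equivariant map $\psi\colon X\to G/H$ needed for the balanced-product lemma \cite[Proposition 6.5]{Boas}; your write-up is simply more explicit about how the trivialization produces $\psi$ and about the verifications $Z/H=Y''$ and $H_{k(\mathfrak{p}'')}\simeq I_{G_{Y''}}(\mathfrak{p}'')$. For the converse, the paper appeals directly to Theorem \ref{slices} without detailing why linear reductivity of the slice group forces linear reductivity of every $I_G(\mathfrak{p})$; your computation of the inertia in $Z\times^{G_{\mathfrak q}}G_{Y'}$ as a conjugate of $I_{G_{\mathfrak q}}(z)$, together with commutativity, the subgroup property \cite[Proposition 2.7]{AOV}, and descent \cite[Theorem 2.16]{AOV}, makes that step honest.
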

\begin{proof}
When $G$ is commutative, since by proposition \ref{tamefreeext}, up to passing to a finite finitely presented flat neighborhood of $\mathfrak{q}$, there is a subgroup $H$ of $G$ such that $(X/H , G/H)$ defines a torsor over $Y$, then $X/H \times_Y X/H \simeq X/H \times_{Y} G/H$. This gives us a $G$-equivariant morphism $\psi : X \rightarrow G/H$ after the fppf base change $X/H \rightarrow Y$. So, $(X,G)$ is induced by the action $(Z, H)$ using the notation of the previous lemma. The converse is due to Theorem \ref{slices} since if the action $(X,G)$ admits finitely presented flat slices such that the slice group at $\mathfrak{p} \in X$ are linearly reductive, then for any $\mathfrak{p}\in X$, $I_G(\mathfrak{p})$ is linearly reductive.
\end{proof}
\section{Tame action by affine group scheme vs Tame quotient stack}
\subsection{Algebraic interlude}
The following lemma is quite important in the sense that it relates the functor of invariants to the functor $Com_A(-, B)$. As a consequence, it relates in particular the exactness of this functors which will be important in order to compare the two notions of tameness (see Lemma \ref{corexact}). 
\begin{lemme} \label{cotensisom}
Suppose that the Hopf algebra $A$ is flat over $R$. Let $B \in \mathcal{M}^A$ and $M \in {}_R\mathcal{M}$ be finitely presented as $R$-modules. There is a natural (functorial on $M$) isomorphism:
$$(B \otimes_R M^* )^A \simeq Com_A (M, B)$$
\end{lemme}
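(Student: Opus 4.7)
The plan is to establish the isomorphism through a chain of natural identifications, the central one being that the functor of invariants commutes with tensoring by $M^*$, a fact that uses the flatness of $A$ over $R$.

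First I would endow $M$ with the trivial $A$-comodule structure $\rho_M(m) = m \otimes 1_A$. Under this convention, an $R$-linear map $f : M \to B$ is an $A$-comodule morphism precisely when $\rho_B(f(m)) = f(m) \otimes 1_A$ for every $m \in M$, i.e.\ when the image of $f$ lies in $B^A$. This gives the identification
\[
Com_A(M, B) \;\cong\; Hom_R(M, B^A).
\]

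Next I would exploit the flatness of $A$ to move the invariants inside the tensor product. Starting from the defining left-exact sequence
\[
0 \longrightarrow B^A \longrightarrow B \xrightarrow{\;\rho_B - (id_B \otimes 1_A)\;} B \otimes_R A,
\]
tensoring over $R$ with $M^*$ and using flatness of $A$ together with the finite presentation of $M$ (so that $M^*$ is suitably well-behaved with respect to tensor products) preserves left-exactness. After the canonical shuffle $(B \otimes_R A) \otimes_R M^* \cong (B \otimes_R M^*) \otimes_R A$, the resulting map is exactly the one defining the $A$-invariants of $B \otimes_R M^*$ (with the comodule structure induced by $B$, trivial on $M^*$). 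This yields
\[
(B \otimes_R M^*)^A \;\cong\; B^A \otimes_R M^*.
\]
Since $M$ is finitely presented, the evaluation map $B^A \otimes_R M^* \to Hom_R(M, B^A)$ sending $b \otimes \phi$ to $m \mapsto \phi(m)\,b$ is an isomorphism. Composing this with the identification from the first step produces the desired natural isomorphism, and naturality in $M$ is automatic because every intermediate map is functorial.

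The main obstacle I expect is the careful bookkeeping in the middle step: one has to check that the shuffle isomorphism $(B \otimes_R A) \otimes_R M^* \cong (B \otimes_R M^*) \otimes_R A$ genuinely intertwines $(\rho_B - id_B \otimes 1_A) \otimes_R id_{M^*}$ with the invariants-defining map for the tensor comodule $B \otimes_R M^*$, and that flatness of $A$ over $R$ combined with the finite presentation of $M$ really suffices to keep the kernel stable under $(-\otimes_R M^*)$. Both the exactness argument in the middle and the duality step at the end rest on these two hypotheses, so the verification of compatibility between the coaction and the dualization is where the proof carries its technical weight.
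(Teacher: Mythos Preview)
Your argument diverges from the paper's in a way that matters, and it also carries a technical gap.

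\medskip

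\textbf{On the comodule structure of $M$.} You equip $M$ with the \emph{trivial} coaction and immediately reduce $Com_A(M,B)$ to $Hom_R(M,B^A)$. The paper does not do this: in its proof the maps $\psi$ and $\gamma$ are written using the Sweedler terms $m_{(0)},\,m_{(-1)}$ and the antipode $S$, so $M$ is being treated as a genuine (left) $A$-comodule, and the right-comodule structure on $M^*$ induced from it is essential to the definition of $\rho_{B\otimes_R M^*}$. This is not an idle generalisation: the only place the lemma is invoked (Proposition~\ref{relativemt}) takes $M=B$ with its nontrivial coaction, where $Com_A(B,B)$ is certainly not $Hom_R(B,B^A)$. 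So even if your argument were airtight, it would prove a special case that does not serve the paper's purpose; the statement's ``$M\in{}_R\mathcal{M}$'' should be read with an implicit comodule structure.

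\medskip

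\textbf{On the exactness step.} You write that ``tensoring over $R$ with $M^*$ \dots\ preserves left-exactness'' thanks to flatness of $A$ and finite presentation of $M$. But flatness of $A$ says nothing about whether $(-)\otimes_R M^*$ is left exact; for that you would need $M^*$ flat, which is not assumed. Likewise your final identification $B^A\otimes_R M^*\cong Hom_R(M,B^A)$ requires either $M$ finitely generated projective or $B^A$ flat over $R$, neither of which is given. The paper avoids this detour: it places the two left-exact rows
\[
0\to (B\otimes_R M^*)^A\to B\otimes_R M^*\xrightarrow{\phi}(B\otimes_R M^*)\otimes_R A,\qquad
0\to Com_A(M,B)\to Hom_R(M,B)\xrightarrow{\psi}Hom_R(M,B\otimes_R A)
\]
side by side, writes down the canonical vertical maps $\alpha$ and $\gamma$, and then spends the entire proof on an explicit Sweedler-and-antipode calculation showing $\gamma\circ\phi=\psi\circ\alpha$. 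The isomorphism on kernels then drops out. That computation with the antipode is precisely what your trivial-coaction shortcut erases, and it is where the content of the lemma lives.
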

\begin{proof}
The commutativity of the right square of the diagram below insures the existence of $\lambda$ which is an isomorphism and so $\alpha$ and $\gamma$ as well. 
$$ \xymatrix{ 0 \ar[r] & (B \otimes_R M^*)^A \ar[d]^-{\lambda} \ar[r] & B \otimes_R M^* \ar[d]^-{\alpha} \ar[r]^-{\phi} & (B\otimes_R M^*)\otimes_R A\ar[d]^-{\gamma} \\ 0 \ar[r] & Com_A ( M, B ) \ar[r] & Hom_R ( M, B) \ar[r]_-{\psi} & Hom_R (M , B \otimes_R A) }$$
where $\phi :=  \rho_{B\otimes_R M^*}  - Id_B \otimes Id_{M^*} \otimes 1$, $\psi:= (\rho_B \circ -)   -  \rho_{M^*}$ and for any $b\otimes f \otimes a \in B \otimes_R M^* \otimes_R A$ and $m \in M$, $\gamma  ( b \otimes f \otimes a) (m)= \sum b. f(m_{(0)}) \otimes a. m_{(-1)}$\\
Indeed, the map $\gamma$ is an isomorphism as composite of the isomorphism $\gamma_1 : B\otimes_R M^* \otimes_R A \rightarrow B \otimes_R A \otimes_R M^*$ defined for any $b\otimes f \otimes a\in B\otimes_R M^* \otimes_R A$ by $\gamma_1 (b\otimes f \otimes a ) =b\otimes a.m_{(-1)}\otimes f(m_{(0)})$ (Its inverse map is defined for any $b \otimes a \otimes f \in B \otimes_R A \otimes_R M^*$ by $\gamma_1^{-1}( b \otimes a \otimes f )= \sum b \otimes f(m_{(0)}) \otimes a. S(m_{(-1)})$) with the canonical isomorphism $\gamma_2 : B\otimes_RA  \otimes_R M^* \rightarrow Hom_R ( M  , B\otimes_R A )$. 
For any $b \otimes f \in B \otimes_R M^*$, we have
$$\begin{array}{lll} \psi \circ \alpha ( b \otimes f) & = & \psi ( [m \mapsto b f(m)])= [ m \mapsto \rho_B (b) f(m) - \sum b f(m_{(0)}) \otimes m_{(-1)}]\end{array} $$ 
and
$$\begin{array}{lll} \gamma \circ \phi ( b\otimes f ) &= & \gamma ([m \mapsto \sum b_{(0)} \otimes f(m_{(0)} ) \otimes b_{(1)} S(m_{(-1)})- b \otimes f(m) \otimes 1]) \\ &=& [m \mapsto (b_{(0)} \otimes b_{(1)} S(m_{(_1)}) (m_{(0)})_{(-1)} ) f((m_{(0)})_{(0)} - \sum b f(m_{(0)} \otimes m_{(-1)}]  \\ &= & [m\mapsto \sum (b_{(0)} \otimes b_{(1)} \epsilon (m_{(-1)} ) )f(m_{(0)} ) -\sum b f(m_{(0)} ) \otimes m_{(-1)}] \\ & = & [m \mapsto \rho_B (b) f(m) - \sum b f(m_{(0)} )\otimes m_{(-1)} ]. \end{array} $$
\end{proof}
\subsection{Tame actions}
 Chinburg, Erez, Pappas and Taylor defined in their article \cite{Boas} the notion of tame action. We recall here this definition and some useful properties.
\begin{definition} 
We say that an action \textbf{$(X, G)$ is tame} if there is a unitary (that is $\alpha (1_A ) = 1_B$)
morphism of $A$-comodules $\alpha : A \rightarrow B$, which means that $\alpha$ is  a $R$-linear map such that $\rho_B \circ \alpha = (\alpha \otimes Id_A) \circ \delta$, such a morphism is called a \textbf{total integral}. \end{definition} 
Tame actions are stable under base change.
\begin{lemme} \label{basechange2} If the action $(X,G)$ is tame, then after an affine base change $R' \rightarrow R$ the action $(X_{R'} , G_{R'})$ is also tame.
\end{lemme}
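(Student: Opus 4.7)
The plan is to verify that the base change of a total integral is a total integral. Starting from a unitary $A$-comodule morphism $\alpha : A \to B$, I propose to set
\[
\alpha_{R'} := \alpha \otimes_R \mathrm{id}_{R'} : A_{R'} \longrightarrow B_{R'},
\]
and then check that $\alpha_{R'}$ is $R'$-linear, unitary, and a morphism of $A_{R'}$-comodules.

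The first step is to record the base-changed structures. The Hopf algebra $A_{R'} = A \otimes_R R'$ carries the comultiplication $\delta_{R'} = (\delta \otimes_R \mathrm{id}_{R'})$ (after identifying $A_{R'} \otimes_{R'} A_{R'} \simeq (A \otimes_R A) \otimes_R R'$) and the unit $1_{A_{R'}} = 1_A \otimes 1_{R'}$. Similarly, $B_{R'}$ is a right $A_{R'}$-comodule via the structure map $\rho_{B_{R'}} = \rho_B \otimes_R \mathrm{id}_{R'}$ under the same identification $B_{R'} \otimes_{R'} A_{R'} \simeq (B \otimes_R A) \otimes_R R'$, and the unit is $1_{B_{R'}} = 1_B \otimes 1_{R'}$.

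Next, I verify the three conditions. The $R'$-linearity of $\alpha_{R'}$ is immediate from the construction as a tensor product of $R$-linear maps base changed along $R \to R'$. The unitarity follows from
\[
\alpha_{R'}(1_A \otimes 1_{R'}) = \alpha(1_A) \otimes 1_{R'} = 1_B \otimes 1_{R'} = 1_{B_{R'}},
\]
using the hypothesis $\alpha(1_A) = 1_B$. For the comodule morphism property, I compute
\[
\rho_{B_{R'}} \circ \alpha_{R'} = (\rho_B \circ \alpha) \otimes_R \mathrm{id}_{R'} = \bigl((\alpha \otimes \mathrm{id}_A) \circ \delta\bigr) \otimes_R \mathrm{id}_{R'} = (\alpha_{R'} \otimes \mathrm{id}_{A_{R'}}) \circ \delta_{R'},
\]
where the middle equality uses the hypothesis on $\alpha$ and the outer equalities are functoriality of $-\otimes_R R'$ combined with the identifications of base-changed tensor products recorded above.

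There is no genuine obstacle here; the proof is purely formal and reduces to the observation that the notion of total integral is preserved under the monoidal base change functor $-\otimes_R R' : \mathcal{M}_R \to \mathcal{M}_{R'}$. The only care needed is to keep track of the canonical isomorphisms $(A \otimes_R A) \otimes_R R' \simeq A_{R'} \otimes_{R'} A_{R'}$ and $(B \otimes_R A) \otimes_R R' \simeq B_{R'} \otimes_{R'} A_{R'}$, so that the base-changed structure maps are the ones induced by the Hopf and comodule structures on $A_{R'}$ and $B_{R'}$.
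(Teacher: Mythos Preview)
Your proof is correct and follows exactly the same route as the paper's: tensor the total integral $\alpha$ with $R'$ and observe that the result is again a unitary comodule morphism. The paper compresses this into a single sentence (``there is an $A$-comodule map $A\rightarrow B$, which induces naturally a comodule map $A_{R'}\rightarrow B_{R'}$''), whereas you spell out the verification of unitarity and the comodule identity; the content is identical.
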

\begin{proof}
Since the action $(X,G)$ is tame, there is a $A_C$-comodule map $A \rightarrow B$, which induces naturally a comodule map $A_{R'} \rightarrow B_{R'}$. 
\end{proof}

The next lemma will allow us to assume that the base is equal to the quotient, if the structural map $X\rightarrow S$ has the same properties as the quotient morphism $X\rightarrow Y$.
\begin{lemme}\label{Cmodérée}
The following assertions are equivalent:
\begin{enumerate}
\item The action $(X, {}_CG)$ over $C$ is tame .
\item The action $(X,G)$ over $S$ is tame.
\item The action $({}_CX , {}_CG)$ over $C$ is tame.
\end{enumerate}
\end{lemme}
\begin{proof}
$(1)\Rightarrow (2)$ Let $\alpha : {}_C A \rightarrow B$ be a total integral for the tame action $(X,{}_CG)$. Since $B$ is a $ {}_C  A$-comodule via $\rho_B : B \rightarrow B \otimes_R A \simeq B \otimes_C {}_C A$, the composite $\alpha':=\alpha \circ i_A : A \rightarrow B$, where $i_A : A \rightarrow C \otimes_R A$ maps $a$ to $1\otimes a$, is a unitary $A$-comodule map so a total integral for the action $(X, G)$.\\
$(2) \Rightarrow (3)$ Follows from base change, by Lemma \ref{basechange2}. \\ 
$(3) \Rightarrow (1)$ Denote by $\beta :  {}_C A \rightarrow  {}_C  B$ the total integral for the action $({}_CX, {}_CG)$.   Recall that $Id_C \otimes_R A$ is a $ {}_C A$-comodule via $Id_C \otimes \delta$ and $ {}_C  B$ is a $ {}_C A$-comodule map via $Id_C \otimes \rho_B$. Consider the composite $\beta' : = \mu_C \circ \beta : C \otimes_R A \rightarrow B$ where $\mu_C: C \otimes_R B \rightarrow B$ comes from the algebra multiplication of $B$. For any $b\in B$ and $c\in C$, $$\begin{array}{lll}(\mu_C \otimes Id_A )((Id_C \otimes \rho_B) (c \otimes b))&=&(\mu_C \otimes Id_A ) ( \sum c \otimes b_{(0)} \otimes b_{(1)})\\ &=& \sum c b_{(0)} \otimes b_{(1)}= \rho_B (c) \rho_B (b) \text{ (since $C= B^A$)}\\& = & \rho_B (\mu_C(b\otimes c)) \text{ (since $B$ is an $A$-comodule algebra).}\end{array}$$ Thus, $\mu_C$ is an $A_C$-comodule map and $\beta '$ also being compositions of $A_C$-comodule maps. 
\end{proof}
\subsection{Exactness of the functor of the invariants for tame actions}
In the case of a constant group scheme, by \cite[Lemma 2.2]{Boas}, the tameness of an action is equivalent to the surjectivity of the trace map, which generalizes a characterization of tameness in number theory (see \cite[Chapter I, \S 5, Theorem 2]{CasFro}) and justifies the choice of the terminology. 
In the general case, we can define a projector which plays the role of the trace map in the case of an action by a constant group scheme (see \cite[section 1]{Doi}).
\begin{lemme} \cite[\S 1]{Doi}\label{projecteur}
From a total integral map $\alpha : A \rightarrow B$ and $M\in {}_B\mathcal{M}^A$, we can define a $R$-linear projector called \textbf{Reynold operator} 
$$\begin{array}{llll} pr_M: &M &\rightarrow &M^A\\ &m &\mapsto & pr_M(m):=pr_{M, \alpha } (m) = \sum_{(m)}\alpha (S(m_{(1)}))m_{(0)}.\end{array}$$
\end{lemme}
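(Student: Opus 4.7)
The plan is to verify three things about the map $pr_M$: that it is $R$-linear, that its image lies in the submodule of invariants $M^A$, and that it restricts to the identity on $M^A$. Together these yield that $pr_M$ is an $R$-linear projector of $M$ onto $M^A$.

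$R$-linearity is immediate, since $pr_M$ is the composition of the $R$-linear maps $\rho_M$, $\mathrm{Id}_M \otimes (S \circ \alpha)$ (using that $S$ and $\alpha$ are $R$-linear), the flip on the tensor factors, and the scalar multiplication $B \otimes_R M \to M$.

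The main content is to show $\rho_M(pr_M(m)) = pr_M(m) \otimes 1_A$ for every $m \in M$. Starting from $pr_M(m) = \sum \alpha(S(m_{(1)})) \cdot m_{(0)}$ and applying $\rho_M$, I would use, in order:
\begin{enumerate}
\item the $(B,A)$-module compatibility $\rho_M(b \cdot n) = \rho_B(b) \cdot \rho_M(n)$, to split off the $B$- and $M$-factors;
\item the comodule-map property of the total integral, $\rho_B \circ \alpha = (\alpha \otimes \mathrm{Id}_A) \circ \delta$, to convert $\rho_B(\alpha(-))$ into a comultiplication inside $A$;
\item the anti-coalgebra property of the antipode, $\delta \circ S = \tau \circ (S \otimes S) \circ \delta$, to push $S$ past $\delta$;
\item the coassociativity of $\rho_M$, which equates the iterated indices arising from $(\rho_M \otimes \mathrm{Id}_A) \circ \rho_M$ and $(\mathrm{Id}_M \otimes \delta) \circ \rho_M$ and allows one to merge everything into a single multi-fold Sweedler decomposition of $m$;
\item the antipode identity $\sum S(a_{(1)}) a_{(2)} = \epsilon(a)\, 1_A$, together with the counit axiom $(\mathrm{Id} \otimes \epsilon) \rho_M = \mathrm{Id}_M$, to collapse an inner pair of Sweedler factors to $1_A$.
\end{enumerate}
After these substitutions the two remaining factors reassemble to $pr_M(m) \otimes 1_A$, establishing invariance.

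The projector property follows immediately from the previous step: for $n \in M^A$ one has $\rho_M(n) = n \otimes 1_A$, so the defining sum reduces to the single term $\alpha(S(1_A)) \cdot n$; since $S(1_A) = 1_A$ and $\alpha$ is unitary with $\alpha(1_A) = 1_B$, this equals $n$. In particular $pr_M \circ pr_M = pr_M$, so $pr_M$ is a projector onto $M^A$.

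The main obstacle is purely bookkeeping: tracking Sweedler indices coherently across the interplay of the comodule coassociativity on $M$, the comultiplication on $A$, and the anti-coalgebra behavior of $S$. With a consistent four-fold Sweedler convention, the verification is mechanical and parallels the classical Hopf-algebraic treatment of Maschke-type averaging (see \cite[\S 1]{Doi}).
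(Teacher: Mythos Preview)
Your proposal is correct and follows essentially the same route as the paper: both verify invariance by the Sweedler computation using the $(B,A)$-module compatibility, the comodule-map property of $\alpha$, the anti-coalgebra behaviour of $S$, coassociativity of $\rho_M$, and the antipode/counit identities, and then deduce the projector property from $\alpha(1_A)=1_B$. One minor slip: in your $R$-linearity description the relevant map is $\mathrm{Id}_M\otimes(\alpha\circ S)$, not $\mathrm{Id}_M\otimes(S\circ\alpha)$.
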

\begin{proof}
For any $m \in M$ and $a \in A$, we have $$\rho_M(\alpha (a))= (\alpha \otimes A) \delta (a) = (\alpha \otimes A) (\sum a_1 \otimes a_2) = \sum \alpha (a_1) \otimes a_2.$$ Thus, $$\rho_M (m. \alpha ( a)) = \sum m_{(0)} \alpha (a_1) \otimes m_{(1)} a_2 \ (\square ).$$
We obtain: $$\begin{array}{lll}  \rho_M (pr_M (m)) &=& \rho_M (\sum m_{(0)} \alpha (S(m_{(1)})) \\ &=& \sum (m_{(0)})_{(0)}\alpha (S(m_{(1)})_1 \otimes (m_{(0)})_1S(m_{(1)})_1)  \ (\text{by } (\square ) )\\
&=& \sum (m_{(0)})_{(0)}\alpha (S((m_{(1)})_2)\otimes (m_{(0)})_1S((m_{(1)})_1) \text{ (since $S$ is an antimorphism) }\\
&=&\sum m_{(0)} \alpha (S(m_{(3)}))\otimes m_{(1)}S(m_{(2)}) = \sum m_{(0)} \alpha (S(m_{(2)}))\otimes (m_{(1)})_1 S(m_{(1)})_2 \\
&=& \sum m_{(0)} S(m_{(2)}) \otimes \epsilon (m_{(1)})1 \text{ (by the definition of an antipode})\\
&=& \sum m_{(0)} S(m_{(1)} \epsilon (m_{(1)})) \otimes 1 = \sum m_{(0)} S((m_{(1)})_1 \epsilon (m_{(1)})_1) \otimes 1 \\
&=& \sum m_{(0)} S(m_{(1)})\otimes 1 \text{(by the properties of the counity).}\\
&=& pr_M (m) \otimes 1 
\end{array} $$
So, $pr_M (m) \in (M)^A $ and ${pr_M}^2=pr_M$. Moreover, for any $m \in (M)^A$, $\rho_M(m)=m\otimes 1$, Hence, $pr_M (m) =m \alpha (s(1))=m\alpha (1)=m$ since $\alpha (1) = 1$. This proves that $pr_M$ is a Reynold operator.\end{proof}
The existence of this projector insures that the functor of invariants in exact, for tame actions. The following lemma will permit to relate the previous two notions of tameness. 
\begin{lemme} \cite[Lemma 2.3]{Boas}\label{inv} If the action $(X, G)$ is tame then the functor of invariants $(-)^A: {}_B\mathcal{M}^A \rightarrow {}_C\mathcal{M}$ is exact. \end{lemme}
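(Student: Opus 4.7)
The functor $(-)^A$ is left exact by its definition (via the equalizer diagram given in the introduction), so the only thing to establish is right exactness. Concretely, given a short exact sequence
$$0 \rightarrow M' \xrightarrow{\iota} M \xrightarrow{f} M'' \rightarrow 0$$
in ${}_B\mathcal{M}^A$, I only need to show that $f^A : M^A \rightarrow (M'')^A$ is surjective. Fix $n \in (M'')^A$ and, using surjectivity of $f$ on the underlying modules, lift it to some $m \in M$ with $f(m) = n$. The strategy is to correct $m$ using the Reynolds operator from Lemma \ref{projecteur} so that it lands in $M^A$ while still mapping to $n$.

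The central step is to verify the naturality identity $f \circ pr_M = pr_{M''} \circ f$. This uses both structures on $f$: since $f$ is a morphism of $A$-comodules, one has $\rho_{M''}(f(m)) = \sum f(m_{(0)}) \otimes m_{(1)}$ for all $m \in M$; and since $f$ is $B$-linear, one can pull $\alpha(S(m_{(1)})) \in B$ through $f$. Applying these two observations to the explicit formula $pr_M(m) = \sum \alpha(S(m_{(1)})) m_{(0)}$ gives
$$f(pr_M(m)) = \sum \alpha(S(m_{(1)})) f(m_{(0)}) = pr_{M''}(f(m)).$$

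Now I apply this with my chosen lift $m$: the element $\tilde m := pr_M(m)$ lies in $M^A$ by Lemma \ref{projecteur}, and $f(\tilde m) = pr_{M''}(n) = n$, where the last equality holds because the Reynolds operator acts as the identity on invariants (again by the end of the proof of Lemma \ref{projecteur}: any $n$ with $\rho_{M''}(n) = n \otimes 1$ satisfies $pr_{M''}(n) = n \alpha(S(1)) = n \alpha(1) = n$ by unitarity of $\alpha$). Hence $f^A$ is surjective, and $(-)^A$ is exact. The only real subtlety is making sure the Reynolds operator is compatible with morphisms in ${}_B\mathcal{M}^A$; once that naturality is in place, the argument is just a diagram-chase, and no calculation beyond what is already done in the proof of Lemma \ref{projecteur} is required.
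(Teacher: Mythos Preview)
Your proof is correct and follows essentially the same approach as the paper: both use the Reynolds operator $pr_M$ from Lemma~\ref{projecteur} to establish right exactness, with the key point being the naturality $f\circ pr_M = pr_{M''}\circ f$ for morphisms in ${}_B\mathcal{M}^A$. The paper compresses this into a commutative diagram and the single phrase ``right exactness follows from the previous diagram,'' whereas you spell out the diagram chase and the naturality computation explicitly.
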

\begin{proof}
Let $0\rightarrow M_1 \rightarrow M_2 \rightarrow M_3 \rightarrow 0$ be an exact sequence of ${}_B \! \mathcal{M}^A$. Using the notation of the previous lemma, we have the following commutative diagram:
$$\xymatrix{
&& 0 \ar[d] &\\
0 \ar[r] & (M_1)^A=pr_{M_1}(M_1) \ar[r] \ar[d]_{\phi_1|_{{(M_1)}^A}}& \ar@/_2pc/[l]^{pr_{M_1}} M_1  \ar[d]^{\phi_1} \dar[r]^-*[@]{\hbox to 0pt{\hss\txt{ $\rho_{M_1}$ \\  \\ }\hss}}_-{{M_1}\otimes 1} & M_1\otimes A \ar[d] \\
0 \ar[r] & (M_2)^A=pr_{M_2}(M_2) \ar[r] \ar[d]_{\phi_2|_{{(M_2)}^A}}& \ar@/_2pc/[l]^{pr_{M_2}} M_2  \ar[d]^{\phi_2} \dar[r]^-*[@]{\hbox to 0pt{\hss\txt{ $\rho_{M_2}$ \\  \\ }\hss}}_-{{M_2}\otimes 1} & M_2\otimes A \ar[d] \\
0 \ar[r] & (M_3)^A=pr_{M_3}(M_3) \ar[r] & \ar@/_2pc/[l]^{pr_{M_3}} M_3  \ar[d] \dar[r]^-*[@]{\hbox to 0pt{\hss\txt{ $\rho_{M_3}$ \\  \\ }\hss}}_-{{M_3}\otimes 1} & M_3\otimes A  \\
&&0 &\\
  }$$
Left exactness is automatic, right exactness follows from the previous diagram.
\end{proof}
Adding finite hypothesis, we are able to prove that the exactness of the functor of invariant is equivalent to tameness of the action. 
\begin{proposition} \label{relativemt} \label{corexact}
Suppose that $C$ is locally noetherian, that $B$ is flat over $C$ and that $A$ is finite, locally free over $R$. Then, the following assertions are equivalent:\begin{enumerate}
\item[(1)] The action $(X,G)$ is tame.
\item[(2)] The functor $(-)^A : {}_B \mathcal{M}^A\rightarrow {}_C \mathcal{M}$, $N \mapsto (N)^A$ is exact.
\item[(1')] The action $(X , {}_C G)$ is tame. 
\item[(2')] The functor $(-)^{{}_C A} : {}_B \mathcal{M}^{{}_C A}\rightarrow {}_C \mathcal{M}$, $N \mapsto (N)^{{}_C A}$ is exact.
\end{enumerate}
\end{proposition}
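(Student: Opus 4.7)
The plan is to reduce the full equivalence to one non-trivial implication, $(2)\Rightarrow(1)$, using the lemmas already at hand. The implication $(1)\Rightarrow(2)$ (respectively $(1')\Rightarrow(2')$) is exactly Lemma~\ref{inv}, and $(1)\Leftrightarrow(1')$ is Lemma~\ref{Cmod�r�e}. For $(2)\Leftrightarrow(2')$, I would observe that the categories ${}_B\mathcal{M}^A$ and ${}_B\mathcal{M}^{{}_CA}$ are canonically identified: any $B$-linear right $A$-coaction $\rho_N:N\to N\otimes_R A$ on an object of ${}_B\mathcal{M}^A$ is automatically $C$-linear since $C=B^A\subseteq B$, so it factors through the identification $N\otimes_R A\simeq N\otimes_C {}_CA$, and conversely every ${}_CA$-coaction restricts to an $A$-coaction; under this identification the functors $(-)^A$ and $(-)^{{}_CA}$ literally coincide, so (2) and (2') are equivalent. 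It therefore remains to prove $(2)\Rightarrow(1)$.

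To produce a total integral $\alpha:A\to B$, I would exploit the surjection dual to the unit of $A$. Since $A$ is finite locally free over $R$, the unit $\eta:R\to A$ is split as $R$-modules by the counit $\epsilon$; furthermore $\eta$ is a morphism of left $A$-comodules (with $R$ carrying the trivial coaction and $A$ carrying $\delta$), so its $R$-linear dual
$$\eta^*:A^*\twoheadrightarrow R$$
is a split surjection of $R$-modules and simultaneously a morphism of right $A$-comodules for the right $A$-coaction on $A^*$ dual to $\delta$. Tensoring over $R$ with $B$ — split exactness makes this harmless — produces a short exact sequence
$$0\longrightarrow K \longrightarrow B\otimes_R A^* \longrightarrow B \longrightarrow 0$$
in ${}_B\mathcal{M}^A$, in which the right-hand map is $b\otimes f\mapsto b\cdot f(1_A)$.

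Assumption (2) then tells us that applying $(-)^A$ preserves this exact sequence, yielding a surjection $(B\otimes_R A^*)^A\twoheadrightarrow B^A=C$. Lifting $1_B\in C$ gives an element $\beta\in(B\otimes_R A^*)^A$. Applying Lemma~\ref{cotensisom} with $M=A$ (finitely presented over $R$, since $A$ is finite locally free) furnishes a natural isomorphism $(B\otimes_R A^*)^A\simeq \mathrm{Com}_A(A,B)$; let $\alpha:A\to B$ be the right $A$-comodule map corresponding to $\beta$. Tracking the correspondence, $\alpha(1_A)$ is precisely the image of $\beta$ under $B\otimes_R A^*\to B$, which was chosen to be $1_B$. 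Hence $\alpha$ is unitary, i.e.\ a total integral, and $(X,G)$ is tame.

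The main obstacle is the bookkeeping verifying that $\eta^*$ — and then $B\otimes_R\eta^*$ — really is a morphism in ${}_B\mathcal{M}^A$, which amounts to disentangling the left and right $A$-comodule structures on $A^*$ coming from Hopf duality and checking the $B$-module compatibility of the resulting diagonal coaction on $B\otimes_R A^*$. Likewise, a careful unwinding of the natural isomorphism in Lemma~\ref{cotensisom} is needed to see that the image of $\beta$ in $B^A$ really is $\alpha(1_A)$. Once these compatibilities are in place, the remaining steps are a formal diagram chase, and the hypotheses that $C$ is locally noetherian and $B$ flat over $C$ only enter through the equivalence $(2)\Leftrightarrow(2')$ and through Lemma~\ref{cotensisom}.
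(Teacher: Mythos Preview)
Your argument is correct, and it takes a cleaner route than the paper for the non-trivial implication. The paper establishes $(2')\Rightarrow(1')$ by applying Lemma~\ref{cotensisom} over $C$ with $M=B$ and $M=B\otimes_C{}_CA$: from the split injection $\rho_B:B\hookrightarrow B\otimes_C{}_CA$ it gets a surjection $(B\otimes_C{}_CA)^*\twoheadrightarrow B^*$, tensors with $B$, applies exactness of $(-)^{{}_CA}$, and via the two instances of Lemma~\ref{cotensisom} obtains a surjection $\mathrm{Com}_{{}_CA}(B\otimes_C{}_CA,B)\twoheadrightarrow\mathrm{Com}_{{}_CA}(B,B)$; lifting $\mathrm{Id}_B$ yields a comodule retraction $\lambda_B$ of $\rho_B$, from which a total integral is extracted. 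You instead prove $(2)\Rightarrow(1)$ directly over $R$ with $M=A$, producing the total integral in one stroke as the element of $\mathrm{Com}_A(A,B)\simeq(B\otimes_RA^*)^A$ lifting $1_B$.

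The gain is not merely cosmetic. Because $A$ is finite locally free over $R$, the canonical maps $B\otimes_RA^*\to\mathrm{Hom}_R(A,B)$ and $B\otimes_RA\otimes_RA^*\to\mathrm{Hom}_R(A,B\otimes_RA)$ are isomorphisms with no flatness assumption on $B$, so your invocation of Lemma~\ref{cotensisom} needs nothing beyond the standing hypothesis that $A$ is flat. By contrast, the paper's choice $M=B$ forces $B$ to be finitely presented over $C$ (hence the noetherian hypothesis on $C$, together with finiteness of $G$) and $B$ flat over $C$ (for the $\mathrm{Hom}$-tensor isomorphism). Your closing remark is therefore too modest: neither the identification ${}_B\mathcal{M}^A={}_B\mathcal{M}^{{}_CA}$ nor your use of Lemma~\ref{cotensisom} actually consumes the hypotheses ``$C$ locally noetherian'' and ``$B$ flat over $C$'', so your argument in fact shows $(1)\Leftrightarrow(2)\Leftrightarrow(1')\Leftrightarrow(2')$ assuming only that $A$ is finite locally free over $R$.

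One point worth tightening in a final write-up: the right $A$-comodule structure you put on $A^*$ must match the one implicit in Lemma~\ref{cotensisom}, so that under the isomorphism $(B\otimes_RA^*)^A\simeq\mathrm{Com}_A(A,B)$ the map $b\otimes f\mapsto bf(1_A)$ really corresponds to $\alpha\mapsto\alpha(1_A)$. This is the bookkeeping you flag yourself; once the conventions are fixed it is a routine check.
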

\begin{proof}
$(1) \Leftrightarrow (1')$ follows from lemma \ref{Cmodérée}.\\
$(2) \Leftrightarrow (2')$ follows from ${}_B\mathcal{M}^A= {}_B\mathcal{M}^{A_C}$.\\
$(1)\Rightarrow (2)$ follows from lemma \ref{projecteur}\\ 
$(2') \Rightarrow (1')$ Suppose that $(-)^A$ is exact. Since ${}_C A$ is finite over $C$ (base change of $A$ which is finite over $R$) and $B$ is finite over $C$, $B \otimes_C {}_C A$ is finite over $C$. Moreover, as $C$ is locally noetherian, $B$ and $B \otimes_RA$ are also of finite presentation as algebras over $C$ so, in particular, as $C$-modules. By lemma \ref{cotensisom} since we suppose $B$ flat over $C$, we have the following isomorphism$$(B\otimes_R B^*)^{{}_C A} \simeq Com_{{}_C A}(B, B)  \ and \ (B \otimes_R (B \otimes_RA)^*)^{{}_C A} \simeq Com_{{}_C A} (B \otimes_C {}_C A, B)$$
Since $(B\otimes \epsilon)$ is a $C$-linear section of $\rho_B : B \rightarrow B \otimes_R A$, $(B\otimes_C {}_C A)^* \rightarrow B^*$ is surjective. Thus, from the exactness of $(-)^{{}_C A}$, we obtain the surjectivity of $$(B\otimes_R (B \otimes_RA)^*)^{{}_C A} \rightarrow (B \otimes_R B^*)^{{}_C A}$$ Finally, the isomorphisms above imply the surjectivity of the natural map $$Com_{{}_C A} (B \otimes_C {}_C A, B)\rightarrow Com_{{}_C A} (B , B)$$ This insures the existence of a ${}_C A$-comodule map $\lambda_B : B \otimes_C {}_C A \rightarrow B$ such that $\lambda_B  \circ \rho_B  =B$. The previous lemma permits to conclude the proof.
\end{proof}
\subsection{Relationship between the two notions of tameness}
First, we also can prove easily that the tame actions defines always tame quotient stacks.
\begin{theoreme}\label{inertiefini}
Suppose that $X$ is noetherian, finitely presented over $S$, that $G$ is finitely presented over $S$ and and that all the inertia groups are finite. If the action $(X,G)$ is tame then the quotient stack $[X /G]$ is tame. 
\end{theoreme}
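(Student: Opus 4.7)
The plan is to observe that, under the running hypotheses, tameness of the action already forces the quotient stack to coincide with the categorical quotient $Y$, so the stack-theoretic notion of tameness reduces to exactness of the invariants functor, which is precisely the output of a Reynolds operator. No new construction is needed; the proof is essentially a chaining of the earlier lemmas.

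Concretely, I would proceed in three steps. First, since $(X,G)$ is tame, Lemma \ref{inv} (built from the total integral via the Reynolds operator of Lemma \ref{projecteur}) directly yields exactness of the functor of invariants $(-)^A : {}_B\mathcal{M}^A \rightarrow {}_C\mathcal{M}$. Second, since $X$ is noetherian and finitely presented over $S$, $G$ is finitely presented over $S$, and all inertia groups are finite, Theorem \ref{corocoarse} applies and tells me that $\rho : [X/G] \rightarrow Y$ is a coarse moduli space for $[X/G]$, with $\rho$ proper. By the universal property of coarse moduli spaces (they are initial among morphisms from $[X/G]$ to an algebraic space), $\rho$ is canonically isomorphic to the coarse moduli space $\phi : [X/G] \rightarrow M$ of Theorem \ref{coarseexistence}, so that $M \simeq Y$ and $\phi$ identifies with $\rho$.

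Third, I invoke Lemma \ref{exactinv}, which states that exactness of $\rho_*$ is equivalent to exactness of the invariants functor $(-)^A$. Combining with the identification $\rho_{*} = \phi_{*}$ from the previous step, the exactness of $\phi_* : \mathrm{Qcoh}([X/G]) \rightarrow \mathrm{Qcoh}(M)$ follows, which is exactly the definition of tameness of $[X/G]$ (Definition \ref{AOVdef}). I do not anticipate any real obstacle; the only point requiring a bit of care is the identification of the two coarse moduli spaces, but this is immediate from their universal property once both exist.
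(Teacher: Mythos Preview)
Your proposal is correct and follows essentially the same route as the paper: the paper's own proof is the one-line ``This follows directly from Lemma \ref{inv} and Corollary \ref{corocoarse}'', and your three steps simply unpack this, making explicit the identification of $\rho$ with the coarse moduli map $\phi$ and the appeal to Lemma \ref{exactinv} that are left implicit there. Nothing further is needed.
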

\begin{proof}
This follows directly from Lemma \ref{inv} and Corollary \ref{corocoarse}.\\
\end{proof}
We manage to prove the equivalence of these two notions of tameness defined in this  paper for actions involving finite group schemes such that the $B$ is flat over $C$:
\begin{theoreme}\label{proppal}
Suppose that $G$ is finite and locally free over $S$. The action $(X,G)$ is tame if and only if the quotient stack $[X/G]$ is tame. If moreover, $C$ is noetherian and $B$ flat over $C$, the converse is true. \end{theoreme}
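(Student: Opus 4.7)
The plan is to reduce both notions of tameness --- that of the action $(X,G)$ and that of the quotient stack $[X/G]$ --- to one and the same algebraic condition, namely the exactness of the functor of invariants $(-)^A : {}_B\mathcal{M}^A \to {}_C\mathcal{M}$, and then to invoke the equivalences already established in the paper. The whole proof will really be an assembly of results, so I do not expect any genuine technical obstacle; the only thing to check carefully is that the hypotheses of Proposition \ref{corexact} line up.

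The main hinge of the argument is the following reformulation. Since $G$ is finite and flat over $S$, by \cite[\S 3]{conrad} (as recalled in Remark \ref{quotient}(2)) the morphism $\rho : [X/G]\to Y$ realizes $Y$ as the coarse moduli space of $[X/G]$. Consequently, by the remark following Definition \ref{AOVdef} (which is itself a direct consequence of Lemma \ref{exactinv} in that case), the quotient stack $[X/G]$ is tame if and only if the functor of invariants $(-)^A$ is exact. This replaces the stack-theoretic tameness condition by a purely algebraic one, which we can then connect to tameness of the action.

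For the forward direction, I would assume $(X,G)$ is tame. Lemma \ref{inv} provides a Reynold operator on every $(B,A)$-module and thereby yields the exactness of $(-)^A : {}_B\mathcal{M}^A \to {}_C\mathcal{M}$. By the hinge established above, $[X/G]$ is tame. Note that for this direction nothing more than $G$ finite and flat is needed.

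For the converse, assume additionally that $C$ is noetherian and that $B$ is flat over $C$, and suppose that $[X/G]$ is tame. By the hinge, $(-)^A$ is exact. Since $G$ is finite and locally free over $S$, the Hopf algebra $A$ is finite and locally free over $R$, so the hypotheses of Proposition \ref{corexact} are satisfied ($C$ locally noetherian, $B$ flat over $C$, $A$ finite locally free over $R$). That proposition then gives the tameness of $(X,G)$, completing the equivalence.
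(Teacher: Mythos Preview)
Your proposal is correct and follows essentially the same route as the paper: identify $Y$ as the coarse moduli space via \cite[\S 3]{conrad}, translate tameness of $[X/G]$ into exactness of $(-)^A$ via Lemma \ref{exactinv}, then use Lemma \ref{inv} for the forward direction and Proposition \ref{corexact} for the converse. The only minor imprecision is that it is Lemma \ref{projecteur} that constructs the Reynold operator, while Lemma \ref{inv} already states the exactness conclusion directly.
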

\begin{proof}
By \cite[\S 3]{conrad}, we know that $\rho : [X/G]\rightarrow Y$ is a coarse moduli space.  Moreover, $\rho_* : Qcoh ([X/G]) \rightarrow Qcoh(Y)$ is exact by Lemma \ref{inv}, thus $[X/G]$ is tame. The converse follows from Proposition \ref{corexact}..
\end{proof}
\begin{remarque}
We can replace the hypothesis $Y$ noetherian by $X$ of finite type over $S$. In fact, by \cite[Theorem 3.1, (2)]{conrad}, Y is of finite type over $S$ so also noetherian since $S$ is supposed noetherian.
\end{remarque}
We obtain easily the following corollary. 
\begin{corollaire}\label{lintame}
Suppose that $S$ is noetherian and $G$ is finite and flat over $S$. Then, $G$ is linearly reductive over $S$ if and only if the trivial action $(S, G)$ is tame over $S$. 
\end{corollaire}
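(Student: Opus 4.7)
The plan is to recognize the statement as an immediate application of Theorem \ref{proppal} to the trivial action $(S,G)$, together with item (3) of the remark following Definition \ref{AOVdef}. First I would verify that all hypotheses of Theorem \ref{proppal} hold: for the trivial action we have $B=R$ and the comodule structure map $\rho_B: R \to R\otimes_R A$ is $r\mapsto r\otimes 1$, so $C=B^A=R$. Hence $C$ is noetherian (as $S=\operatorname{Spec}(R)$ is noetherian by hypothesis) and $B=R$ is trivially flat over $C=R$. Since $G$ is finite and flat over $S$ by assumption, Theorem \ref{proppal} applies in both directions.

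Next I would apply Theorem \ref{proppal} to obtain the equivalence: the trivial action $(S,G)$ is tame if and only if the quotient stack $[S/G]$ is tame. Observing that $[S/G] = B_S G$ is by definition the classifying stack of $G$, this reduces the corollary to the statement that $G$ is linearly reductive over $S$ if and only if $B_S G$ is tame. This is precisely item (3) of the remark following Definition \ref{AOVdef}, itself a direct consequence of Theorem \ref{slices}: the inertia groups of the trivial action $(S,G)$ at a geometric point $\xi:\operatorname{Spec}(k)\to S$ are canonically isomorphic to $G_k$, so the characterization via geometric inertia says $B_S G$ is tame iff $G_k$ is linearly reductive for every geometric point $\xi$, which in turn is equivalent to $G$ being linearly reductive over $S$ (since linear reductivity can be checked on geometric fibers, cf.\ the argument in the proof of Theorem \ref{slices}, (3)$\Rightarrow$(4)).

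There is no real obstacle here; the corollary is essentially a packaging result, combining the main equivalence of Theorem \ref{proppal} with the already-established comparison between linear reductivity of $G$ and tameness of $B_SG$. The only small verifications are the computation of invariants for the trivial action and the identification $[S/G]=B_SG$, both of which are formal.
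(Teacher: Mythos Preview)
Your proposal is correct and matches the paper's intended argument: the corollary is placed immediately after Theorem \ref{proppal} precisely because applying that theorem to the trivial action $(S,G)$ (where $B=C=R$, so the noetherian and flatness hypotheses are automatic) reduces the statement to the equivalence between linear reductivity of $G$ and tameness of $B_SG=[S/G]$, which is item (3) of the remark after Definition \ref{AOVdef}. One could shortcut slightly by invoking Proposition \ref{corexact} directly---for the trivial action the functor $(-)^A:{}_B\mathcal{M}^A\to{}_C\mathcal{M}$ is literally the functor $(-)^A:{}_R\mathcal{M}^A\to{}_R\mathcal{M}$ of the definition of linear reductivity---but this is the same content.
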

\begin{remarque}
We say that a Hopf algebra $A$ is \textbf{relatively cosemisimple} if for all $M \in \mathcal{M}^A$ the submodules which are direct summands in $\mathcal{M}_R$ are also direct summands in $\mathcal{M}^A$. The Hopf algebra $A$ is relatively cosemisimple if and only if the trivial action $(Spec(R), G)$ is tame (see \cite[16.10]{wisbauer}). In particular, when $G$ is finite, flat over $S$, this is also equivalent to $G$ being linearly reductive.
\end{remarque}
The following result can be seen as an analogue of the trace surjectivity that we have proved in the constant case. 
\begin{corollaire}\label{projecteureq}
Suppose that $G$ is finite, locally free over $S$, C is locally noetherian and B flat over C. 
The following assertions are equivalent: 
\begin{enumerate}
\item The action $(X,G)$ is tame.
\item The quotient stack $[X/G]$ is tame. 
\item The functor $(-)^A : {}_B \mathcal{M}^A\rightarrow {}_C \mathcal{M}$, $N \mapsto (N)^A$ is exact.
\item There is a Reynold operator $pr_M : M \rightarrow M ^A$ for any $M \in {}_B\mathcal{M}^A$. 
\end{enumerate}
\end{corollaire}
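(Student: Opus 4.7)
The plan is to route the four-way equivalence through the earlier results and then fold in condition $(4)$ using Lemma \ref{projecteur} and the argument of Lemma \ref{inv}. Since $G$ is finite and locally free over $S$, the Hopf algebra $A$ is finite and locally free over $R$, so the hypotheses of Proposition \ref{corexact} (which requires $C$ locally noetherian, $B$ flat over $C$, and $A$ finite locally free over $R$) and of Theorem \ref{proppal} are all satisfied.

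First I would invoke Theorem \ref{proppal} for $(1) \Leftrightarrow (2)$ and Proposition \ref{corexact} for $(1) \Leftrightarrow (3)$, which already yields a three-way equivalence. It then remains only to insert $(4)$ into the cycle, which I would do by proving $(1) \Rightarrow (4) \Rightarrow (3)$. The implication $(1) \Rightarrow (4)$ is immediate from Lemma \ref{projecteur}: starting from a total integral $\alpha : A \to B$, the explicit formula $pr_{M,\alpha}(m) = \sum \alpha(S(m_{(1)})) m_{(0)}$ furnishes a Reynolds operator on every $M \in {}_B\mathcal{M}^A$. For $(4) \Rightarrow (3)$ I would reproduce the diagram chase in the proof of Lemma \ref{inv}: given a short exact sequence $0 \to M_1 \to M_2 \to M_3 \to 0$ in ${}_B\mathcal{M}^A$, left exactness of $(-)^A$ is automatic, and for right exactness, a preimage $m_2 \in M_2$ of an invariant $m_3 \in (M_3)^A$ yields an invariant $pr_{M_2}(m_2) \in (M_2)^A$ mapping onto $pr_{M_3}(m_3) = m_3$.

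The main technical point, and the step that makes the diagram chase work, is the naturality of the family $\{pr_M\}$ with respect to morphisms in ${}_B\mathcal{M}^A$: what is used is precisely that the image of $pr_{M_2}(m_2)$ under $M_2 \to M_3$ coincides with $pr_{M_3}$ applied to the image of $m_2$. This naturality is either taken as part of the working definition of Reynolds operator or, when the operators arise from a total integral via Lemma \ref{projecteur}, it is visible directly from the sigma-notation formula because any morphism in ${}_B\mathcal{M}^A$ intertwines the structure maps $\rho_M$ and is $R$-linear. Closing the cycle through $(3) \Rightarrow (1)$ via Proposition \ref{corexact} then delivers the required four-way equivalence.
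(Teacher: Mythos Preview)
Your proposal is correct and follows essentially the same route as the paper: the paper also deduces $(1)\Leftrightarrow(2)\Leftrightarrow(3)$ from Theorem~\ref{proppal} and Proposition~\ref{corexact}, gets $(1)\Rightarrow(4)$ from Lemma~\ref{projecteur}, and closes the cycle by the Reynolds-operator surjectivity argument you describe (the paper labels that last step $(4)\Rightarrow(2)$, but the argument written there is exactly your $(4)\Rightarrow(3)$). Your explicit remark that this last step uses naturality of the family $\{pr_M\}$ is a point the paper uses silently in writing $\xi(pr_M(m))=pr_N(\xi(m))$; flagging it is a genuine improvement in clarity.
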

\begin{proof}
$(1) \Leftrightarrow (2) \Leftrightarrow (3)$ Follows from the previous theorem.\\
$(1)\Rightarrow (4)$ follows from Lemma \ref{projecteur}. \\
$(4)\Rightarrow (2)$ As the functor $(-)^A$ is left exact, it is enough to prove exactness on the right. So, let $\xi : M \rightarrow N \in {}_B Bim^A (M,N)$ be an epimorphism. It induces a morphism $\xi^A: M^A \rightarrow N^A$. For $n \in N^A$, by the surjectivity of $\xi$, there is $m \in M$ such that $\xi (m) = n$. Moreover, $pr_M(m)\in M^A$, so $\xi (pr_M (m)) = pr_N (\xi (m))= n $. Therefore $\xi$ is surjective. 
\end{proof}
\appendix
\bibliography{biblio} 
 \end{document}